\begin{document}

\newcommand\restr[2]{{
  \left.\kern-\nulldelimiterspace 
  #1 
  \vphantom{\big|} 
  \right|_{#2} 
  }}

\makeatletter
\renewcommand{\@seccntformat}[1]{%
  \ifcsname prefix@#1\endcsname
    \csname prefix@#1\endcsname
  \else
    \csname the#1\endcsname\quad
  \fi}
\makeatother

\makeatletter
\newcommand{\colim@}[2]{%
  \vtop{\m@th\ialign{##\cr
    \hfil$#1\operator@font colim$\hfil\cr
    \noalign{\nointerlineskip\kern1.5\ex@}#2\cr
    \noalign{\nointerlineskip\kern-\ex@}\cr}}%
}
\newcommand{\colim}{%
  \mathop{\mathpalette\colim@{\rightarrowfill@\textstyle}}\nmlimits@
}
\makeatother

\newcommand\rightthreearrow{%
        \mathrel{\vcenter{\mathsurround0pt
                \ialign{##\crcr
                        \noalign{\nointerlineskip}$\rightarrow$\crcr
                        \noalign{\nointerlineskip}$\rightarrow$\crcr
                        \noalign{\nointerlineskip}$\rightarrow$\crcr
                }%
        }}%
}

\title{Tate-valued Characteristic Classes II: Applications}         
\author{Shachar Carmeli and Kiran Luecke}        
\date{\today}          
\maketitle

\begin{abstract}
    We present a construction that manufactures $\E_\infty$ orientations of Tate fixed-point objects together with useful formulas for these maps, and then give a number of applications. For example, we produce a formula for the Frobenius homomorphisms of Thom spectra such as $\MU$ as well as certain lifts of Frobenius. We prove a rigidity property of $\MU$ as a \emph{cyclotomic base}. We construct a general obstruction theory for $\E_n$ complex orientations and establish various non-existence results for $p$-typical $\E_n$ orientations for low values of $p$ and $n$. We end with some miscellaneous further applications.
\end{abstract}

\tableofcontents

\section{Introduction}
\subsection{Background and Main Results}
The \emph{complex bordism} spectrum $\MU$ has played a central role in stable homotopy theory ever since the pioneering work of Adams, Novikov, and Quillen, (\cite{Adams}, \cite{Novikov}, \cite{Quillen}) . Moreover, $\MU$ is an $\E_\infty$ ring spectrum (\cite{Lewis})---a fact which was implicitly used as early as \cite{Quillen}---and recent perspectives have made it increasingly clear that this $\E_\infty$ structure is also of central significance. Recall that a complex orientation of a homotopy ring spectrum $R$ is a homotopy ring map $\MU\to R$. Thus, a fundamental question in understanding the $\E_\infty$ structure of $\MU$ is: if $R$ is an $\E_\infty$ ring, when does it admit a complex orientation by an $\E_\infty$ map? This question is notoriously difficult but has seen much progress: for example, Ando, Hopkins, and Rezk (\cite{ahr}) establish a correspondence between $\E_\infty$ orientations of $R$ (for suitable $R$) and certain sequences in $\pi_*R$. Hopkins and Lawson (\cite{hoplaw}) constructed an obstruction theory for lifting a homotopy complex orientation to an $\E_\infty$ one. Hahn and Yuan (\cite{hahnyuan}) showed that the \emph{periodic} bordism spectrum $\MUP$ admits at least two different $\E_\infty$-structures, and initiate a program to understand various aspects of $\E_\infty$ orientation theory. More recently, Balderrama (\cite{Balderrama}) establishes the existence of $\E_\infty$ orientations of Morava $E$-theories at heights $\leq 2$, and   Burklund, Schlank, and Yuan (\cite{nullstel}), for algebraically closed Morava $E$-theories of arbitrary height. 

The $\E_\infty$ algebra structure of $\MU$ arises from its presentation as the \emph{Thom spectrum} of an $\E_\infty$ map, namely the $j$-homomorphism $\bu\to\pic(\S)$. We will exploit the $\ku$-module structure of $\bu$ to manufacture $\E_\infty$ orientations, at the cost of passing to a Tate fixed-point object,  generalizing our construction from \cite{tatevaluedchar} to complex representations of general compact Lie groups. More precisely, we give a construction (\ref{sharp_construction}) that takes as input a suitable representation $\V $ of a compact Lie group $G$ and an $\E_\infty$ ring map $\omega:\MU\to R$, and produces an $\E_\infty$ ring map
\[\#_\V(\omega):\MU\to R^\tG,\]
where $(-)^\tG$ is an appropriate Tate fixed-points functor. Crucially, our construction comes with a usable formula (\ref{eulertate_formula}) for the underlying homotopy ring map. With it we are able to transport simple linear algebraic facts about the representation $\V $ into concrete formulas controlling the $\E_\infty$ algebra structure of $R^\tG$. 

A central example of an $\E_\infty$ ring map valued in Tate fixed-points (and one which has received much recent attention) is the \emph{Frobenius homomorphism}, defined by Nikolaus and Scholze (\cite[IV.1]{NikolausScholze}). This is a map $\Fr:R\to R^{tC_p}$ which is natural in the $\E_\infty$ ring $R$ and encodes its cyclic power operations (cf. \cite[IV.1.21]{NikolausScholze}). Our first theorem is that, in the complex oriented context, the Frobenius is a special case of our sharp construction.


\begin{introthm}\label{intro_frob_sharp}(\ref{Frob_is_sharp})
    Let $\rho$ be the complex regular representation of $C_p$. Let $\id$ be the identity map of $\MU$. Then the map
    \[\#_\rho(\id):\MU\to\MU^{tC_p}\]
    gotten by \Cref{sharp_construction} is homotopic to the Frobenius as an $\E_\infty$ ring map.
\end{introthm}
Combined with the formulas we get for sharp maps (\Cref{eulertate_formula}) we immediately get the following corollary.

\begin{cor}(\ref{Frobcoordinate})
    Let $\xu\in (\MU^{tC_p})^*\CP^\infty$ be the coordinate corresponding to the unit map $\MU\to\MU^{tC_p}$. Then the coordinate corresponding to the Frobenius is given by
    \[\Fr(\xu)=\xu\prod_{k=1}^{p-1}\frac{\xu+_\Fu [k]_\Fu(\tu)}{[k]_\Fu(\tu)}.\]
\end{cor}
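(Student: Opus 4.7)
The plan is to invoke Theorem~\ref{intro_frob_sharp}, which identifies $\Fr$ with the sharp map $\#_\rho(\id)$, and then to unpack formula~(\ref{eulertate_formula}) for the underlying ring map of $\#_\rho(\id)$. Concretely, the goal is to express the image of the universal $\MU$-coordinate under $\#_\rho(\id)$ as a power series in $\xu$ and $\tu$.

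The representation-theoretic input is the character decomposition of the complex regular representation,
\[
\rho \;\cong\; \bigoplus_{k=0}^{p-1} L^{\otimes k},
\]
where $L$ denotes the standard one-dimensional character of $C_p$. Writing $\tu \in (\MU^{tC_p})^*$ for the first Chern class of $L$, additivity of first Chern classes along the formal group law $\Fu$ identifies the first Chern class of $L^{\otimes k}$ with $[k]_\Fu(\tu)$. In particular, the nontrivial summands of $\rho$ carry Chern classes $[1]_\Fu(\tu), \ldots, [p-1]_\Fu(\tu)$.

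Now apply~(\ref{eulertate_formula}) summand-by-summand: a one-dimensional summand $V_i$ with first Chern class $c_i$ contributes a factor of the form $(\xu +_\Fu c_i)/c_i$ to the image of the $\MU$-coordinate, reflecting the Euler class of the line bundle $V_i \otimes \mathcal{O}_{\CP^\infty}(1)$ divided by that of $V_i$. For the $p-1$ nontrivial summands of $\rho$ this produces exactly the product appearing in the statement. For the trivial summand $L^{\otimes 0}$, the formal expression $(\xu +_\Fu 0)/0$ is undefined, but the correct interpretation is a bare factor of $\xu$, corresponding to the fact that $V_0 \otimes \mathcal{O}(1) \cong \mathcal{O}(1)$ and so its Euler class is $\xu$ itself with no twisting to normalize away.

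The main obstacle is the careful handling of the trivial summand. To do this rigorously one should split the trivial factor off first, writing $\#_\rho = \#_{\mathbf{1}} \cdot \#_{\rho'}$ where $\rho' = \bigoplus_{k=1}^{p-1} L^{\otimes k}$, apply the formula to $\rho'$ (all of whose Chern classes are units in the relevant Tate ring, so no division issues arise), and observe directly that $\#_\mathbf{1}$ sends the coordinate to $\xu$. Combining these two factors gives precisely the claimed expression for $\Fr(\xu)$.
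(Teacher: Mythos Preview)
Your overall strategy---reduce to $\#_\rho$ via Theorem~\ref{intro_frob_sharp}, decompose $\rho$ into characters, and invoke the Euler-class formula~(\ref{eulertate_formula})---matches the paper's proof exactly. The only issue is a misreading of what~(\ref{eulertate_formula}) actually computes, which leads you to invent an obstacle that is not there.

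Formula~(\ref{eulertate_formula}) does not compute $\#_\rho(\xu)$ directly; it computes the Euler--Tate characteristic class $\tch_{\rho,\id}$, defined as the \emph{ratio} $\#_\rho/u_*\id$. In particular, the formula involves only $\overline{\V}$, the part of $\V$ with the trivial summand already stripped off (this splitting $\V = \C_\triv \oplus \overline{\V}$ is built into the setup of the sharp construction). So for the tautological line $L$ on $\CP^\infty$ one gets
\[
\tch_\rho(L) = \frac{e(L\otimes \overline{\rho})}{e(\overline{\rho})} = \prod_{k=1}^{p-1}\frac{\xu+_\Fu[k]_\Fu(\tu)}{[k]_\Fu(\tu)},
\]
with no $k=0$ term and hence no division by zero. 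The leading factor of $\xu$ then comes from the identity $\#_\rho(\xu) = \xu\cdot \tch_\rho(L)$, which is just the definition of $\tch$ as a ratio. Your proposed workaround, factoring $\#_\rho = \#_{\mathbf{1}}\cdot \#_{\rho'}$, is not well-formed: sharp maps are $\E_\infty$ ring maps rather than multiplicative characteristic classes, and in any case $\#_{\rho'}$ is undefined since the sharp construction requires its input representation to contain a trivial summand.
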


Thus, one can think of \Cref{intro_frob_sharp} as an $\E_\infty$ lift of this formula. 
\begin{rem}
    We make no claim to originality regarding the formula: it is surely known to experts. As mentioned above, it is equivalent to the knowledge of the cyclic power operations in $\MU$ which goes back to \cite{Quillen}. However, to our knowledge this formula has not appeared in the literature, and we give a self contained proof that does not rely on manifold geometry or transversality arguments (as used by Quillen).
\end{rem} 

Next, we would like to demonstrate the usefulness of this with some applications. First, as another corollary of \Cref{intro_frob_sharp} we get certain lifts of the Frobenius map of $\MU$, wich recovers an unpublished result of Hahn-Devalapurkar-Raksit-Yuan\footnote{They originally announced the result at Oberwolfach in 2023 but cf. \cite[Example 7.1.4]{sanath} for an outline of their argument.}. Write $\T$ for the Lie group of unit complex numbers.

\begin{cor}(\ref{thm_lift_of_Frob})
The Frobenius $\Fr:\MU\to\MU^{tC_p}$ factors, as an $\EE_\infty$-ring map, as a composite of a map $\#_{\pi_p}:\MU\to(\MU^{t\T})_{(p)}$ and the canonical restriction map $(\MU^{t\T})_{(p)}\to\MU^{tC_p}$. 
\end{cor}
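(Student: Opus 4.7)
The strategy is to factor the Frobenius through a sharp construction at the circle level, using that the complex regular representation of $C_p$ arises as the restriction of a canonical $\T$-representation. By \Cref{intro_frob_sharp}, $\Fr \simeq \#_\rho(\id)$ with $\rho$ the regular representation of $C_p$, so it suffices to exhibit a $\T$-representation $\pi_p$ with $\pi_p|_{C_p} \cong \rho$ and to verify that the sharp construction is natural under subgroup restriction.

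The natural candidate is $\pi_p := \bigoplus_{k=0}^{p-1} L^k$, where $L$ is the defining character of $\T$; indeed $L^k|_{C_p} = \chi^k$, so summing over $k = 0, \ldots, p-1$ recovers $\rho$ on the nose. Invoking the formula of \Cref{eulertate_formula}, the map $\#_{\pi_p}(\id)$ sends the coordinate to
\[
\xu \cdot \prod_{k=1}^{p-1} \frac{\xu +_\Fu [k]_\Fu(t)}{[k]_\Fu(t)},
\]
where $t$ is the Euler class of $L$ in $\MU^{t\T}$. Each denominator has the form $[k]_\Fu(t) = kt + O(t^2)$, which becomes a unit exactly when the integer $k$ is inverted; since $k$ ranges over $1, \ldots, p-1$, all coprime to $p$, the expression is well-defined in $(\MU^{t\T})_{(p)}$. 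This accounts for the $p$-localization in the statement: note in particular that $[p]_\Fu(t)$ is not a unit $p$-locally, which is why $\pi_p$ deliberately omits the summand $L^p$.

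The final ingredient is naturality of the sharp construction under subgroup inclusion: for a closed subgroup $H \hookrightarrow G$ and a suitable $G$-representation $\V$, one expects a canonical $\E_\infty$-ring homotopy
\[
\mathrm{res}_H^G \circ \#_\V(\omega) \simeq \#_{\V|_H}(\omega),
\]
where $\mathrm{res}_H^G : R^{tG} \to R^{tH}$ is the restriction of Tate constructions. Granting this and applying it to $H = C_p \subset \T = G$ with $\V = \pi_p$, combining with \Cref{intro_frob_sharp} produces the required factorization of $\Fr$ as an $\E_\infty$-ring map.

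The main obstacle is promoting this restriction naturality from the level of underlying homotopy ring maps — where it follows transparently from the formula, using that each $[k]_\Fu(t)$ pulls back to $[k]_\Fu(\tu)$ under the restriction $(\MU^{t\T})_{(p)} \to \MU^{tC_p}$ — to the level of coherent $\E_\infty$-ring maps. I expect this to be handled by inspecting \Cref{sharp_construction}, whose ingredients are assembled $G$-equivariantly in a manner functorial in the group, provided the relevant Euler-class invertibility hypotheses transport along restriction. In our setting this transport is automatic because each $[k]_\Fu(t)$ with $k$ coprime to $p$ restricts to a unit in $\MU^{tC_p}$, so the hypotheses needed for \Cref{sharp_construction} on both sides are matched by the restriction map.
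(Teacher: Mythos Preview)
Your approach matches the paper's: the naturality of the sharp construction under subgroup restriction that you anticipate is exactly \Cref{lem_compare_sharps}, proved by observing that $i\colon C_p\hookrightarrow\T$ induces a map between the two instances of Display~(\ref{display_sharp}) defining $\#_{\pi_p}$ and $\#_\rho$, so the defining nullhomotopies are carried to one another. One point you should make precise: the sharp construction for $(\T,\pi_p)$ does not land directly in $\MU^{t\T}$ or its $p$-localization, because the family $\sF$ in \Cref{sharp_construction} is \emph{determined} by $\overline{\pi_p}$ and here equals $\sF_p=\{1,C_2,\dots,C_{p-1}\}$, so the target is $\MU^{t_{\sF_p}\T}$; your Euler-class argument (that $[k]_\Fu(t)$ is a unit once $k$ and $t$ are inverted) is then precisely what identifies $\MU^{t_{\sF_p}\T}$ as a localization of $\MU^{B\T}$ mapping to $(\MU^{t\T})_{(p)}$, as carried out in \Cref{lift_Frob_p_localization}.
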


 $\E_\infty$ rings with such lifts of Frobenius are called \emph{cyclotomic bases} (cf. \cite[Definition 3.2.1]{HRW}); such rings $A$ are of great interest because the relative $\mathrm{THH}$ of an $A$-algebra $R$ admits a cyclotomic structure. Furthermore, a map \emph{of cyclotomic bases} is one which commutes with these lifts of Frobenius. Using our formulas, we prove the following rigidity result, indicating that $\MU$ admits rather few automorphisms as a cyclotomic base.
\begin{introthm}(\ref{cyclo_rigidity})
    The only homotopy ring endomorphism of $\MU$ which can be lifted to a map of cyclotomic bases is the identity map.
\end{introthm}

Next, we use our formulas to build an obstruction theory for $\E_n$ complex orientations of $\E_\infty$ rings (cf. \Cref{prop_final_obsser_obs}). When applied to $p$ typical orientations, it is very much in the spirit of Johnson-Noel's calculations in \cite{JohnsonNoel}, but enjoys a few differences (cf. \Cref{compare_JN}). We use this obstruction theory to prove some explicit results, including the following. Recall that the Quillen idempotent is the map $\MU\to\MU_{(p)}$ which represents the universal $p$-typical formal group law. 

\begin{introthm}(\ref{thm_no_p2_E5}, \ref{QI_p3})
    If $R$ is an $\E_\infty$ ring spectrum with nonzero $T(1)$-localization at $p=2$ then $R$ admits no complex orientation which is both 2-typical and representable by an $\E_5$ map. Moreover, the Quillen idempotent at $p=3$ is not representable by an $\E_5$-map.
\end{introthm}

The best previously known bounds on the commutativity of the Quillen idempotent are due to Lawson and Senger, who showed it is not $\E_7$ at $p=2$ and not $\E_9$ at $p=3$, respectively (\cite[Remark 4.4.7]{LawsonBP}, \cite[Theorem 1.3]{Senger}).

Finally, in \Cref{section_further_app} we present some miscellaneous further applications of our sharp construction.

\subsection{Conventions}
All rings and orientations are $\E_\infty$ unless otherwise stated. $\T$ denotes the Lie group of unit complex numbers. For an $\E_\infty$ ring $R$ we make frequent use of the equivalence between the space of $\E_\infty$ ring maps $\MU\to R$ and the space of nullhomotopies of the $j$-homomorphism $\bu\to\pic(R)$, (cf. e.g \cite{abghr} or \cite{barthelthom}). For a complex oriented ring $R$ with coordinate $x\in R^*\CP^\infty$, the Tate construction $R^{tC_p}$ admits a complex orientation via the unit map $R\to R^{tC_p}$, and by an abuse of notation we also use $x$ to denote the associated coordinate. As the title suggests, this is a sequel to \cite{tatevaluedchar}, but it can be read independently.

\subsection{Acknowledgments}
We would like to thank Jonathan Beardsley, Sanath Devalapurkar, Shay Ben Moshe, Jeremy Hahn, Connor Malin, Eric Peterson, Charles Rezk, John Rognes, Tomer Schlank, and Neil Strickland for very helpful conversations. We especially thank Jeremy Hahn for suggesting that our methods might shed light on the commutativity of the Quillen idempotent. We thank the entire Spring 2025 Chicago Malort seminar group for a wonderful discussion about \Cref{intro_frob_sharp}. We thank Sanath Devalapurkar for notifying us about forthcoming joint work with Jeremy Hahn, Arpon Raksit, and Allen Yuan.

S.C. is supported by a research grant from the Center for New Scientists at the Weizmann Institute of Science and BSF grant 2024766. He would like to thank the Azrieli Foundation for their support through an Early Career Faculty Fellowship.

K.L. would like to thank the Isaac Newton Institute for Mathematical Sciences, Cambridge, for support and hospitality during the programme Beyond the Telescope Conjecture where work on this paper was undertaken. This work was supported by EPSRC grant no EP/Z000580/1.

\section{A ``sharp'' construction and the Frobenius of $\MU$}
In this section we produce a formula for the complex orientation of $\MU^{tC_p}$ induced by the Frobenius map $\Fr:\MU\to\MU^{tC_p}$ (\Cref{Frobcoordinate}). Our approach is to present a more general construction for producing $\E_\infty$ maps from Thom spectra to Tate fixed-point objects, together with usable formulas for these maps (\Cref{sharp_construction}). It is a generalization of the ``sharp'' construction of \cite[5.11]{tatevaluedchar}, which was itself inspired by (and named after) Ando-French-Ganter's construction in \cite{afg}. 

\subsection{The sharp construction}

Let $G$ be a compact Lie group and let $\V =\C_\mathrm{triv}\oplus\overline{\V}$ be a complex virtual representation of $G$ such that $\overline{\V}$ contains no trivial summands. Write $\sF$ for the family of subgroups $H$ of $G$ for which $\overline{\V}$ does contain a trivial summand when restricted to $H$. For $\cC\in\calg(\cat^\perf)$, write $\cC[\sF]$ for the thick tensor ideal generated by induced objects $X\otimes G/H$, with $X\in\cC$, $H\in\sF$, and $G$ acting by translation on the $G/H$ factor. Write
\[\Nm_\sF:\cC[\sF]\to\cC^{BG} \]
for the inclusion, and $T_\sF$ for the categorical Tate construction (Verdier quotient)
\[\tu _\sF:\cC^{BG}\to\cC^{t_\sF G}:=\cC^{BG}/\Nm_\sF\]
which on unit objects recovers the classical Tate construction $(-)^{t_\Fu G}$ (cf. \cite{GreenleesMay}). 
\begin{defn}
    For an $\E_\infty$ ring $R$, we write
    \[
    \pic^{t_\sF G}(R):= \pic(\Perf(R)^{t_\sF G})
    \]
      
\end{defn}

Note that this is not the same as $\pic(R^{t_\sF G})$---indeed, we introduce these categories because, unlike $\Perf(R^{t_\sF G})$, the $\infty$-category $\Perf(R)^{t_\sF G}$ receives the Tate fixed-point construction as a \emph{strong symmetric monoidal} functor. However, to connect to complex orientation theory, we need to define ``$j$-homomorphisms'' for them. Let $\pi:BG\to\pt$ be the projection, and $\pi^*:\Perf(R)\to\Perf(R)^{BG}$ the induced ``trivial action'' map.

\begin{defn}\label{defn_categorical_tate_j}
    Define the \emph{$j$-homomorphism of} $\pic(R)^{t_\sF G}$ to be the composite
    \[\bu\oto{j_R}\pic(R)\oto{\pi^*}\pic(R)^{BG}\oto{T_\sF}\pic(R)^{t_\sF G}.\]
\end{defn}
On objects, this sends $V\mapsto\S^V\otimes R^{t_\sF G}$, where $R^{t_\sF G}$ refers to the unit of $\Perf(R)^{t_\sF G}$.
Let $R$ be an $\E_\infty$ ring and consider the diagram
\begin{equation}\label{display_sharp}
\begin{tikzcd}
 \bu\arrow[r,"\V\otimes(-)"]&\bu^{BG}\arrow[rr, "j_R^{BG}"]& &\mathrm{pic}(R)^{BG}\arrow[r,"T_\sF"]& \mathrm{pic}(R)^{t_\sF G} \\
\end{tikzcd}.
\end{equation}

\begin{lem}\label{composite_is_j}
    The composite in Display (\ref{display_sharp}), which sends $V\mapsto(\S^{\V\otimes V}\otimes R)^{t_\sF G}$, is canonically homotopic to the $j$-homomorphism of $\pic(R)^{t_\sF G}$ (cf. \Cref{defn_categorical_tate_j}).
 
\end{lem}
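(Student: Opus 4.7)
The plan is to use the splitting $\V = \C_{\mathrm{triv}}\oplus\overline{\V}$ to express the composite of Display~(\ref{display_sharp}) as a tensor product in $\pic(R)^{t_\sF G}$ of the $j$-homomorphism of \Cref{defn_categorical_tate_j} and an ``error term'' controlled by $\overline{\V}$, and then to trivialize the error term using the Euler class.

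First, because $\bu^{BG}$ is a connective spectrum and $V\mapsto\V\otimes V$ is additive in $V$, the direct-sum decomposition $\V\otimes V\simeq V\oplus(\overline{\V}\otimes V)$ upgrades to a homotopy of $\E_\infty$-maps
\[
\V\otimes(-)\;\simeq\;\pi^*\oplus(\overline{\V}\otimes -)\colon\bu\to\bu^{BG}.
\]
Applying $j_R^{BG}$ (which, being $\E_\infty$, turns $\oplus$ into $\otimes$) and then $T_\sF$ (symmetric monoidal on the Picard groupoids), the composite in Display~(\ref{display_sharp}) becomes the pointwise tensor product in $\pic(R)^{t_\sF G}$ of $T_\sF\circ j_R^{BG}\circ\pi^*$ and $T_\sF\circ j_R^{BG}\circ(\overline{\V}\otimes-)$. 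The first factor is, by \Cref{defn_categorical_tate_j}, exactly the $j$-homomorphism of $\pic(R)^{t_\sF G}$; so it suffices to construct a canonical $\E_\infty$-nullhomotopy of the second factor.

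Second, I would trivialize $\S^{\overline{\V}}\otimes R$ in $\Perf(R)^{t_\sF G}$ using its Euler class $e(\overline{\V})\colon R\to \S^{\overline{\V}}\otimes R$. Its cofiber in $\Perf(R)^{BG}$ is, up to suspension, $S(\overline{\V})_+\otimes R$, where $S(\overline{\V})$ denotes the unit sphere of the underlying representation. By definition of $\sF$, every point of $S(\overline{\V})$ has stabilizer in $\sF$, so $S(\overline{\V})_+$ admits an equivariant CW presentation built from orbits $G/H$ with $H\in\sF$; hence $S(\overline{\V})_+\otimes R\in\Perf(R)[\sF]$. The Verdier quotient $T_\sF$ therefore inverts $e(\overline{\V})$, yielding a canonical equivalence $T_\sF(\S^{\overline{\V}}\otimes R)\simeq R^{t_\sF G}$.

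Finally, I would propagate this single-object trivialization to a coherent $\E_\infty$-nullhomotopy. For any $V\in\bu$ the representation $\overline{\V}\otimes V$ (with $V$ carrying trivial $G$-action) is additively built from copies of $\overline{\V}$, so $\S^{\overline{\V}\otimes V}$ is built from tensor powers of $\S^{\overline{\V}}$; the Euler-class trivialization then tensors to give canonical equivalences $T_\sF(\S^{\overline{\V}\otimes V}\otimes R)\simeq R^{t_\sF G}$, compatibly in $V$. The main obstacle I anticipate is verifying the coherence of this assembly at the $\E_\infty$-level, matching the additive splitting of the first step with the multiplicative structure here; I expect this to fall out formally since every functor in sight preserves the relevant symmetric-monoidal structures and the Euler trivialization respects tensor products.
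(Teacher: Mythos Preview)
Your approach is correct and hinges on the same geometric fact as the paper's: the cofiber of the zero section $\S^0\to\S^{\overline{\V}\otimes V}$ (equivalently $\Sigma S(\overline{\V}\otimes V)_+$) is built from $G$-cells with isotropy in $\sF$ and therefore dies under $T_\sF$, so the Euler class becomes an isomorphism. The paper packages this slightly more economically: rather than first splitting the composite as a tensor product and then nulling the $\overline{\V}$-factor, it directly exhibits the natural transformation induced by the inclusion $\S^V=\S^{\C_\triv\otimes V}\hookrightarrow\S^{\V\otimes V}$ and checks objectwise that it becomes an isomorphism after $T_\sF$. Since that inclusion is literally $\mathrm{id}_{\S^V}\otimes e(\overline{\V}\otimes V)$, the two arguments have identical content; the paper's packaging simply makes your flagged $\E_\infty$-coherence concern evaporate, because the inclusion $\C_\triv\hookrightarrow\V$ is a single map of $G$-representations and the induced natural transformation is symmetric monoidal from the start, with no separate assembly step needed.
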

\begin{proof}
    Because of the assumptions on $\overline{\V}$ and $\sF$, the representation sphere $\S^{\overline{\V}}\in\Sp^{BG}$ built out of $G$-cells of the form $\S^k\times G/H$ with $H\in \sF$, and the same holds for $\S^{\overline{\V} \otimes V}$ for a finite dimensional vector space $V$. Thus it satisfies
\[
(\S^{\overline{\V} \otimes V})^{t_\sF G}=0,
\]
so that the inclusion $\S^{\C_\mathrm{triv} \otimes V}\to\S^{\V\otimes V}$ becomes an isomorphism after applying $(-)^{t_\sF G}$. Thus, the induced natural transformation
\[
S^V\otimes R^{t_\sF G}=(\S^{\C_\mathrm{triv}\otimes V}\otimes R)^{t_\sF G}\to(\S^{\V\otimes V}\otimes R)^{t_\sF G} 
\] 
is an isomorphism.
\end{proof}

\begin{construction}\label{sharp_construction}
    Let $\omega:\MU\to R$ be an $\E_\infty$ ring map, $G$ a compact Lie group and $\V $ and $\sF$ as above. We will construct an $\E_\infty$ ring map
    \[\sharp_\V(\omega):\MU\to R^{t_\sF G}.\]
    First we claim that it suffices to produce a nullhomotopy of Display (\ref{display_sharp}). Indeed, note that the unit object of $\pic(R)^{t_\sF G}$ is $R^{t_\sF G}$. Thus by  \Cref{composite_is_j}, the space of nullhomotopies of Display (\ref{display_sharp}) is canonically isomorphic to the space of nullhomotopies of the $j$-homomorphism of $\pic(R)^{t_\sF G}$. By connectedness of $\bu$ and the fact that the basepoint of $\pic(R)^{t_\sF G}$ is $R^{t_\sF G}$, this is isomorphic to the space of nullhomotopies of the classical $j$-homomorphism $j_{R^{t_\sF G}}:\bu\to\pic(R^{t_\sF G})$, i.e. $\E_\infty$ ring maps $\MU\to R^{t_\sF G}$.
    
    Finally, to produce our desired nullhomotopy: with a slight abuse of notation, let $\omega$ also denote the nullhomotopy of $j_R:\bu\to\pic(R)$ corresponding to the orientation $\omega$ of $R$. Then the map $\#_\V(\omega)$ corresponds to the nullhomotopy of Display (\ref{display_sharp}) given by 
    \[\tu _\sF\circ \omega^{BG}\circ(\V\otimes(-)).\]
\end{construction}

\begin{rem}\label{rem_old_sharp}
For comparison, in \cite[Section 5.1]{tatevaluedchar} we focused on the special case of $G=\T$, $\V =1-L$ for the standard representation $L$ of $\T$, and $\sF=\{1\}$. This has the added benefit that one can replace the domain of the first map of Display (\ref{display_sharp}) with $\ku$ (becaue $1-L$ has virtual dimension 0), thereby producing $\MUP$-orientations out of $\MU$-orientations.
\end{rem}

\begin{rem}\label{remark_generalized_sharp}
    There are some obvious generalizations and variations. First, one can replace $\bu$ by any $\ku$-module over $\bu$ (for example, a highly connected cover $\bu\l n\r$) and $\MU$ by the corresponding Thom spectrum (for example, $\MU\l n\r$). In an orthogonal direction, one can also replace $\bu$ with $\bo$, $\MU$ with $\MO$, and the complex representation $\V $ with a real representation. Combining these two directions of generalization with $G=C_2$, $\sF=\{1\}$ and $\V =1-\sigma$, Display (\ref{display_sharp}) becomes
    \[\begin{tikzcd}
\bo\l n\r\arrow[rr,"(1-\sigma)\otimes(-)"]& &\bo\l n+1\r^{BC_2}\arrow[rr, "j_R^{BC_2}"]& &\mathrm{pic}(R)^{BC_2}\arrow[r,"T"]& \mathrm{pic}(R)^{tC_2} \\
\end{tikzcd},\]
where we have used the fact that $1-\sigma$ is in Whitehead filtration 1 so that multiplication by it jumps one stage in the Whitehead filtration. Thus, via the recipe of \Cref{sharp_construction}, this display produces an $\MO\l n\r$-orientation of $R^{tC_2}$ ($\#_\V(\omega):\MO\l n\r\to R^{tC_2}$) for every $\MO\l n+1\r$-orientation of $R$ ($\omega:\MO\l n+1 \r\to R$).
\end{rem}

\subsection{The Euler-Tate map}
In this subsection we derive a formula for the ``Hirzebruch series'' of a map gotten via \Cref{sharp_construction}.

Let $R$ be a complex-orientable commutative ring spectrum. Then $\calg(\MU,R)$ is a torsor for $\Map(\bu,\glone R)$. Thus for every pair of $f,g\in\calg(\MU,R)$ there is a unique element $\frac{f}{g}\in \Map(\bu,\glone R)$ such that 
\[f=g\cdot \frac{f}{g}\in\calg(\MU,R).\]

\begin{defn}\label{defn_ET_map_and_series}
    Let $\omega:\MU\to R, G, \V=\C_\triv\oplus\overline{\V}$, and $\sF$ be such that \Cref{sharp_construction} applies. Let $u:R\to R^{t_\sF G}$ be the unit map, and $u_*\omega:\MU\to R^{t_\sF G}$ the composite orientation. The \emph{Euler-Tate map} of $\#_\V(\omega)$ is defined to be
    \[\tch_{\V ,\omega}:=\frac{\#_\V(\omega)}{u_* \omega}\in\Map(\bu,\glone R^{t_\sF G}).\]
\end{defn}

Thus the Euler-Tate map is what was classically referred to as a ``stable exponential characteristic class'' with values in $R^{t_\sF G}$. A priori it is only defined on rank 0 complex vector bundles. We extend to vector bundles $V\to X$ of positive rank $n$ by the formula
\[\tch_{\V ,\omega}(V):=\tch_{\V ,\omega}(V-n).\]
For a complex oriented ring $\omega:\MU\to R$ write $e_\omega(V)\in R^n(\xu)$ for the euler class of $V$ in the orientation $\omega$. The next proposition gives a formula for the Euler-Tate map of $\#_\V$ as a ratio of euler classes.

\begin{prop}\label{eulertate_formula}
    Let $V\to X$ be a rank $n$ vector bundle over a finite complex $X$. Let $\overline{\V}$ also denote the induced vector bundle over $BG$. Then
    \[\tch_{\V ,\omega}(V)=\frac{e_{u_*\omega}(V\otimes \overline{\V})}{e_{u_*\omega}(\overline{\V})^n}\in\pi_0\Map(X,\glone R^{t_\sF G}).\]
\end{prop}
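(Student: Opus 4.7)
The plan is to unwind the definitions and exploit the multiplicativity of the $j$-homomorphism under the decomposition $\V=\C_\triv\oplus\overline{\V}$.

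By \Cref{composite_is_j}, the orientations $\#_\V(\omega)$ and $u_*\omega$ of $R^{t_\sF G}$ correspond to the nullhomotopies $T_\sF\circ\omega^{BG}\circ(\V\otimes-)$ and $T_\sF\circ\omega^{BG}\circ(\C_\triv\otimes-)$ of the composite in Display~(\ref{display_sharp}). The decomposition $\V\otimes(-)\simeq(\C_\triv\otimes-)\oplus(\overline{\V}\otimes-)$ of H-space maps $\bu\to\bu^{BG}$, together with the facts that $\omega^{BG}$ is an infinite loop space map (sending Whitney sum to tensor product) and $T_\sF$ is strong symmetric monoidal, implies that the first nullhomotopy splits as the tensor product of the second with a residual factor $V\mapsto T_\sF\omega^{BG}(\overline{\V}\otimes V)$. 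Taking ratios,
\[
\tch_{\V,\omega}(V) = T_\sF\omega^{BG}(\overline{\V}\otimes V)\in\pi_0\Map(X,\glone R^{t_\sF G}),
\]
the ``Tate Thom class'' of the bundle $\overline{\V}\otimes V$ in the orientation $u_*\omega$.

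To match this with the claimed ratio of Euler classes, I apply the argument of \Cref{composite_is_j} to $\overline{\V}\otimes V$ itself: by smashing with $\S^{-V}$, the equivalence from the lemma becomes the statement that the zero-section inclusion $\S^0\to\S^{\overline{\V}\otimes V}$ is an isomorphism after $(-)^{t_\sF G}$. Under this Tate trivialization, the Thom class pulls back along the zero section to the Euler class $e_{u_*\omega}(\overline{\V}\otimes V)$, by the defining relation between Thom and Euler classes. Finally, since by convention $\tch_{\V,\omega}(V):=\tch_{\V,\omega}(V-n)$ for a rank-$n$ bundle $V$, we must normalize by the contribution of the trivial bundle $\underline{\C}^n$, for which $\overline{\V}\otimes\underline{\C}^n=\overline{\V}^{\oplus n}$ has Euler class $e_{u_*\omega}(\overline{\V})^n$. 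Dividing yields the stated formula.

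The main obstacle is the translation between the ``Tate Thom class'' and the Euler class in the second step, which requires faithful tracking through the symmetric monoidal structures of $\pic(R)^{BG}$ and the Verdier quotient $\pic(R)^{t_\sF G}$, together with verifying compatibility of the Tate trivialization of the zero section with the Thom--Euler relationship — in particular, justifying that the multiplicative factorization of nullhomotopies in the first step is well-posed once one of the residual factors becomes ``trivialized'' in the Tate quotient.
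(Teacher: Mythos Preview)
Your argument is correct and follows essentially the same route as the paper's proof: both identify $\tch_{\V,\omega}(V)$ as a ratio of Thom classes and then extract the Euler class of $V\otimes\overline{\V}$ via restriction along the zero section $\S^{V}\to\S^{V\otimes\V}$, with the denominator $e_{u_*\omega}(\overline{\V})^n$ arising from the rank normalization. The difference is one of presentation rather than substance. You work abstractly with the multiplicative splitting of nullhomotopies coming from $\V=\C_\triv\oplus\overline{\V}$, whereas the paper passes immediately to a concrete model: it uses the finiteness of $X$ to write $(R^{t_\sF G})^X\simeq(R^X)^{t_\sF G}$ and then invokes the Greenlees--May inversion formula to identify the Tate construction with inverting the relevant Euler classes. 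In that model both Thom isomorphisms are visibly $\omega$-Thom isomorphisms after localization, and their composite is manifestly the Euler class. The ``obstacle'' you flag in your final paragraph---tracking the compatibility of the Tate trivialization with the Thom--Euler relationship---is exactly what the Greenlees--May description renders transparent, so the paper's approach buys you a cleaner resolution of that bookkeeping at the cost of invoking an external result.
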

\begin{proof}
    The characteristic class $\tch_{\V ,\omega}$ is defined as the ratio of two complex orientations of $R^\tG$. Thus $\tch_{\V ,\omega}(V)$ is the ratio of the two Thom classes, or more precisely, the composite of one Thom isomorphism with the inverse of the other. Since $X$ is finite we have $(R^\tG)^X=(R^X)^\tG$. Write $e_\omega(\sF)$ for the collection of euler classes of $G$-representations $U$ such that $U^K=\{0\}$ for all subgroups $K$ not in $\sF$. By the Greenlees-May inversion formula (\cite[3.20]{GreenleesMaybook})
    the denominator Thom isomorphism $u_*\omega$ is gotten from the $\omega$-Thom isomorphism for $V\to X\times BG$ after inverting $e_\omega(\sF)$:
    \[R^*(X\times BG)[e_\omega(\sF)^{-1}]\simeq R^{*+n}(\S^V\times BG)[e_\omega(\sF)^{-1}]. \]
    Similarly, the numerator isomorphism $\#_\V$ is given composing the $e_\omega(\sF)$-inverted  $\omega$-Thom isomorphism
    \[
        R^*(X\times BG)[e_\omega(\sF)^{-1}]\simeq R^{*+n}(\S^{V\otimes \V})[e_\omega(\sF)^{-1}] 
    \]
    with the restriction along the zero section $\S^{\C_\triv}\to\S^{\V}$
    \[R^{*+n}(\S^{V}\times BG)[e_\omega(\sF)^{-1}]\]
    Thus the relevant composite of these two Thom isomorphisms is precisely the $u_*\omega$-euler class of $V\otimes\overline{\V}$. The denominator of $e_{u_*\omega}(\overline{\V})^n$ is acquired since one must first shift $V$ to be rank zero by subtracting a rank $n$ trivial bundle.
\end{proof}

\subsection{The complex oriented Frobenius}\label{section_complex_Frob_formulas}

In this section we illustrate how \Cref{sharp_construction} and \Cref{eulertate_formula} can be used to obtain a formula for the Frobenius homomorphism of $\MU$. 

Recall that for any $\E_\infty$ ring $R$ and prime $p$ Nikolaus and Scholze (cf. \cite[IV.1]{NikolausScholze}) define a \emph{Frobenius homomorphism}
\[\Fr:R\to R^{tC_p}\]
where $(-)^{tC_p}$ is taken with respect to the trivial action. The Frobenius is natural in $R$, and thus encodes quite a lot of the $\E_\infty$ ring structure of $R$---in particular, it encodes much of the data of the power operations of $R$ (cf. \cite[IV.1.21]{NikolausScholze}). It is thus very favorable to have formulas for the Frobenius homomorphism. As it turns out, the Frobenius is a special case of the $\#$ construction.

Let $\rho$ be the regular representation of $C_p$. We may apply \Cref{sharp_construction} with $G=C_p$, $\V =\rho$, and $\sF=\{1\}$ (so that $(-)^{t_\sF C_p}=(-)^{tC_p}$) to obtain an $\E_\infty$ ring map
\[\#_\rho:\MU\to\MU^{tC_p}.\]
\begin{thm}\label{Frob_is_sharp}
    $\#_\rho$ and $\Fr$ are homotopic as $\E_\infty$ ring maps.
\end{thm}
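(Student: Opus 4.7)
The plan is to identify both $\#_\rho$ and $\Fr$ with a common $\E_\infty$-nullhomotopy of the $j$-homomorphism of $\pic(\MU)^{tC_p}$ (cf.\ \Cref{defn_categorical_tate_j}), by re-expressing the Nikolaus--Scholze Frobenius intrinsically as a construction of sharp type.

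First I would use the factorization $\Fr_\MU = \mu^{tC_p}\circ \Delta_p$, where $\Delta_p\colon \MU \to (\MU^{\otimes p})^{tC_p}$ is the Tate diagonal and $\mu\colon \MU^{\otimes p} \to \MU$ is the multiplication, which is $C_p$-equivariant for the trivial action on $\MU$. The Thom spectrum presentation $\MU = \mathrm{Th}(j_\S)$ exhibits $\MU^{\otimes p}$, with its cyclic permutation $C_p$-action, as the Thom spectrum of $\bu^{\oplus p}\to \pic(\S)$ (the $p$-fold sum of $j_\S$), with $C_p$ permuting the summands.

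Next, I would identify the cyclic $C_p$-space $\bu^{\oplus p}$ with $\rho\otimes \bu$, under which the diagonal inclusion $\bu \to \bu^{\oplus p}$ becomes the map $\bu \otimes \C_\mathrm{triv} \hookrightarrow \bu \otimes \rho$ induced by the trivial summand $\C_\mathrm{triv} \subset \rho$. Through this identification and the symmetric monoidality of Thomification, the composite $\mu^{tC_p}\circ \Delta_p$ corresponds, at the level of the $j$-homomorphism, to the composite $\bu \to \bu^{BC_p} \to \pic(\MU)^{BC_p} \to \pic(\MU)^{tC_p}$ whose first map sends $V$ to $V$ equipped with the trivial $C_p$-action.

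Finally, I would invoke \Cref{composite_is_j}: since $\overline{\rho}$ contains no $C_p$-trivial summand, the natural transformation $(\C_\mathrm{triv}\otimes V) \hookrightarrow (\rho \otimes V)$ of maps $\bu \to \bu^{BC_p}$ becomes an equivalence after postcomposition with $T_\sF\circ j_\MU^{BC_p}$. Consequently the $\E_\infty$-nullhomotopy encoding $\Fr_\MU$ agrees with the one obtained from $V \mapsto V \otimes \rho$, which is exactly the defining recipe of $\#_\rho$ in \Cref{sharp_construction}.

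The main obstacle will be justifying that the Tate diagonal $\Delta_p$ on $\MU$, viewed through its Thom spectrum presentation, is encoded at the base level by the $C_p$-equivariant diagonal $\bu \to \bu^{\oplus p}$---and crucially as a map of $\E_\infty$ spaces, not merely as a map of underlying spaces. This amounts to a compatibility between the Tate diagonal and the Thomification functor as symmetric monoidal operations, which should ultimately follow from the lax symmetric monoidality of $(-)^{tC_p}$ together with the symmetric monoidality of the Thom spectrum construction on $\pic$-valued maps, but whose careful verification at the $\E_\infty$ level is the principal technical difficulty.
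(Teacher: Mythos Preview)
Your plan is conceptually close to the paper's, but the execution differs in an important way. The paper does not attempt to trace the Tate diagonal $\Delta_p$ through the Thom spectrum construction. Instead, it works entirely at the level of Picard spaces: it writes down the commutative rectangle
\[
\begin{tikzcd}
\bu \arrow[r,"\otimes\rho"]\arrow[d] & \bu^{BC_p}\arrow[d]\\
\pic(\S)\arrow[r,"(-)^{\otimes p}"]\arrow[d] & \pic(\S)^{BC_p}\arrow[d]\\
\pic(\MU)\arrow[r,"(-)^{\otimes p}"] & \pic(\MU)^{BC_p}\arrow[r,"T"] & \pic(\MU)^{tC_p}
\end{tikzcd}
\]
and then invokes \cite[Corollary 2.14]{CarmeliPic}, which says that on unit objects the bottom composite $\pic(\MU)\to\pic(\MU)^{tC_p}$ \emph{is} the Frobenius. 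The argument is then a comparison of two nullhomotopies of the same map: the ``L-shaped'' one (postcompose $\omega_{\id}$ with the bottom row) gives $\Fr$, while the ``Z-shaped'' one (precompose $\omega_{\id}^{BC_p}$ with $\otimes\rho$, then apply $T$) gives $\#_\rho$; these agree by functoriality of the rectangle. The point is that the cited result packages the entire ``Tate diagonal is compatible with Thomification'' content into a single categorical statement about $p$-th tensor powers on $\pic$, so the obstacle you flagged never has to be confronted directly.

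Your obstacle is genuine: there is no general statement that the Tate diagonal of a Thom spectrum is Thomified from the base diagonal, precisely because spectra lack honest diagonals. The paper even remarks on an alternate proof (attributed to the Chicago Malort seminar) that is closer in spirit to yours: one presents $\MU$ as the pushout $\S\leftarrow \S[U]\to \S$ in $\calg$ and uses naturality of $\Fr$ to reduce to the suspension spectra $\S[U]$ and $\S$, for which one \emph{does} have actual diagonals. If you want to push your approach through, that reduction is the missing ingredient; otherwise, the $\pic$-level argument via \cite{CarmeliPic} is the cleaner route.
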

\begin{proof}
    Consider the following diagram.
\[\begin{tikzcd}
    \bu\arrow[r,"\otimes\rho=(-)^{\oplus p}"]\arrow[d]&\bu^{BC_p}\arrow[d]& 
    \\
    \mathrm{pic}(\S)\arrow[r, "(-)^{\otimes p}"]\arrow[d]&\mathrm{pic}(\S)^{BC_p}\arrow[d]&
    \\
    \mathrm{pic}(\MU)\arrow[r, "(-)_\MU^{\otimes p}"]& \mathrm{pic}(\MU)^{BC_p}\arrow[r, "T"]&\mathrm{pic}(\MU)^{tC_p} \\
\end{tikzcd}\]
First note that both columns are canonically null, via nullhomotopies we will call $\omega_\id$ (left column) and $\omega_\id^{BC_p}$ (right column). By functoriality the rectangle commutes up to canonical homotopy. That is, the two outer paths of the rectangle $\bu\to\pic(\MU)^{BC_p}$ are canonically homotopic \emph{and} the nullhomotopies of each path (given by the relevant pre/post compositions of $\omega_\id$ and $\omega_\id^{BC_p}$) are homotopic. 

By \cite[Corollary 2.14]{CarmeliPic} on unit objects the bottom horizontal composite is homotopic to the Frobenius $\Fr:\MU\to\MU^{tC_p}$ . Thus, the Frobenius corresponds via the universal property of $\MU$  to the nullhomotopy of the ``L''-shaped path from top left to bottom right given by the canonical nullhomotopy of the left column. By what we have concluded about the rectangle, this is homotopic to the nullhomotopy of ``Z''-shaped composite from top left to bottom right given by the canonical nullhomotopy of the right column, which is the orientation corresponding to $\#_\rho$ by definition.
\end{proof}

\begin{rem}
   There is an alternate proof, due to the Chicago Malort seminar, using the presentation of $\MU$ as the pushout $\S\leftarrow\S[U]\oto{j}\S$ in $\calg$ defining the $\EE_\infty$ quotient $\S\modmod U$. By naturality of the Frobenius this reduces the question to the trivial Thom spectra $\S[U]$ and $\S$, whose Frobenius maps are easier to transport into a geometric formulation (because suspension spectra have diagonals as opposed to mere \emph{Tate} diagonals). 
\end{rem}


As an immediate consequence we have the following. Let $\xu\in\MU^*\CP^\infty$ be the canonical coordinate. Write $\xu+_\Fu \yu\in \MU_*[\![\xu ,\yu]\!]$ for the universal formal group law. Recall that for $n\geq 0$ the universal $n$-\emph{series} 
$
[n]_\Fu (\xu)\in\MU_*[\![\xu]\!]
$
is defined inductively by setting 
\[
[0]_\Fu (\xu)=0 \quad \text{and} \quad [n+1]_\Fu (\xu )=[n]_\Fu (\xu )+_\Fu \xu.
\] 
By an abuse of notation, we also use $\xu$ to denote the coordinate induced by the unit map of the Tate construction $\MU\oto{\mathrm{unit}}\MU^{tC_p}$. Finally, we let $\tu\in \MU^2BC_p$ denote the pullback of the canonical coordinate along $BC_p \to \CP^\infty$ and use the same notation for its image in $\pi_{-2}\MU^{tC_p}$.  
These classes give an identification 
\[   (\MU^{tC_p})^*\CP^\infty\simeq \frac{\MU_*(\!(\tu )\!)}{([p]_\Fu (\tu ))}[\![\xu ]\!].
\]
\begin{cor}\label{Frobcoordinate}
    
    The complex orientation of $\MU^{tC_p}$ induced by the Frobenius is given by the coordinate
    \[\Fr(\xu)=\xu\prod_{k=1}^{p-1}\frac{\xu+_\Fu [k]_\Fu (\tu )}{[k]_\Fu (\tu )}.\]
\end{cor}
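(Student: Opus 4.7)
The plan is to combine \Cref{Frob_is_sharp} with the Euler--Tate formula \Cref{eulertate_formula}, applied to the tautological line bundle $L\to\CP^\infty$. By \Cref{Frob_is_sharp}, $\Fr$ and $\#_\rho(\id)$ agree as $\E_\infty$ ring maps $\MU\to\MU^{tC_p}$ and hence induce the same coordinate on $\CP^\infty$. It therefore suffices to compute the coordinate associated to $\#_\rho(\id)$.

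By \Cref{defn_ET_map_and_series}, $\#_\rho(\id)=u_*\id\cdot\tch_{\rho,\id}$ as orientations of $\MU^{tC_p}$. Translating this multiplicative relation between orientations into a relation on coordinates (namely, that the coordinate associated to $\#_\rho(\id)$ is obtained from the coordinate associated to $u_*\id$ by multiplying with the value of $\tch_{\rho,\id}$ on $L$) yields $\Fr(\xu)=\xu\cdot\tch_{\rho,\id}(L)$, where the leading $\xu$ denotes the coordinate induced by the unit map, per the introductory convention.

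To evaluate $\tch_{\rho,\id}(L)$, the next step is to apply \Cref{eulertate_formula} with $V=L$ of rank $n=1$ and $\overline{\V}=\overline{\rho}$, yielding
\[
\tch_{\rho,\id}(L)=\frac{e_{u_*\id}(L\otimes\overline{\rho})}{e_{u_*\id}(\overline{\rho})}.
\]
Decomposing $\overline{\rho}=\bigoplus_{k=1}^{p-1}\chi^k$ into its nontrivial isotypical summands, where $\chi$ is the standard $1$-dimensional character of $C_p$ (corresponding to a line bundle on $BC_p$ with Euler class $\tu$), the multiplicativity of Euler classes on direct sums computes the denominator as $\prod_{k=1}^{p-1}[k]_\Fu(\tu)$, while the formal group law formula for Euler classes of tensor products of complex line bundles computes the numerator as $\prod_{k=1}^{p-1}(\xu+_\Fu [k]_\Fu(\tu))$. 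Substituting yields exactly the displayed formula.

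No serious obstacle is anticipated, since this is a direct unpacking of the preceding results. The only steps warranting some care are the translation between the multiplicative relation among orientations and the corresponding relation among their coordinates, and applying the rank-one case of \Cref{eulertate_formula} correctly (in particular noting that for $n=1$ the denominator $e_{u_*\id}(\overline{\rho})^n$ is simply $e_{u_*\id}(\overline{\rho})$).
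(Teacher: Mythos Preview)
Your proposal is correct and follows essentially the same route as the paper: invoke \Cref{Frob_is_sharp} to reduce to $\#_\rho$, use the Euler--Tate relation to write the coordinate as $\xu\cdot\tch_\rho(L)$, then apply \Cref{eulertate_formula} together with the splitting $\overline{\rho}=\bigoplus_{k=1}^{p-1}\chi^k$ and multiplicativity of Euler classes. The only cosmetic difference is that the paper writes $\tch_\rho(L-1)$ rather than $\tch_\rho(L)$, which by the rank-shift convention is the same class.
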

\begin{proof}
    By \Cref{Frob_is_sharp}, $\Fr(\xu)=\#_\rho(\xu)$. By the definition of the Euler-Tate map, we have
    \[\tch_\rho(L-1)=\frac{\#_\rho(\xu)}{\xu}.\]
    Let $\xi\colon C_p \to \CC^\times$ be the standard representation given by $\xi(k) = e^{\frac{2 \pi i k}{p}} $, so that $\rho=\oplus_{k=0}^{p-1}\xi^k$. Note that the Euler class of $\xi$ is $e_u(\xi)=\tu$. Using \Cref{eulertate_formula} and the multiplicativity of the Euler class, we find that 
    \[
    \tch_\rho(L-1)=\prod_{k=1}^{p-1}\frac{e_u(L\otimes\xi^k)}{e_u(\xi^k)}=\prod_{k=1}^{p-1}\frac{\xu+_\Fu [k]_\Fu (\tu )}{[k]_\Fu (\tu )}.
    \]
    so that 
    \[
    \Fr(\xu) = \#_\rho(\xu) = \xu\cdot \tch_\rho(L-1) = \xu\cdot \prod_{k=1}^{p-1}\frac{\xu+_\Fu [k]_\Fu (\tu )}{[k]_\Fu (\tu )}.
    \]
\end{proof}

\begin{rem}
    The $\F_p^\times$-invariance of the Frobenius (cf. \cite[IV.1.4]{NikolausScholze}) is manifest in the above formula---$\F_p^\times$ acts by sending $\tu$ to $[j]_\Fu (\tu )$ for $j\in {1,..,p-1}$, which simply permutes the factors in the product.
\end{rem}

\begin{rem}\label{evensteenrodalg}
    One can recover a good deal of information about the Frobenius of $\F_p$ as a map of cohomology theories $\F_p\to\F_p^{tC_p}$. Write $\xa $ and $\ta $ for images of $\xu$ and $\tu$ under the canonical map $\MU\to\F_p$. The formula of \Cref{Frobcoordinate} pushes forward under the $\E_\infty$ orientation $\MU\to\F_p$ to the very manageable 
    \[
    \Fr(\xa )=\xa \prod_{k=1}^{p-1}\frac{\xa +k\ta }{k\ta } = \frac{\xa \ta ^{p-1}-\xa ^p}{\ta ^{p-1}}
    \]
    It is only partial information because $\MU\to \F_p$ is not surjective as a map of cohomology theories. However, it sees the entire evenly generated part of the mod $p$ Steenrod algebra: if one defines the $P^i$ as the coefficients of $\ta ^{-ip}$ in the evident decomposition of $\F_p^{tC_p}$ (cf. \cite[IV.1.15]{NikolausScholze}), then the square that witnesses the Frobenius self-commuting\footnote{More precisely, we mean the square that relates the Frobenius of $\F_p$, the induced map on $(-)^{tC_p}$, and the Frobenius of $\F_p^{tC_p}$.} produces the Adem relations.
\end{rem}

Because the Frobenius is natural along $\E_\infty$ ring maps, the difference $\Fr^A\circ\phi-\phi^{tC_p}\circ\Fr^\MU$ is a first obstruction to a homotopy ring map $\phi:\MU\to A$ admitting and $\E_\infty$ structure. We will refine this idea in \Cref{obstruction_section}. For now, we will record a formula for this difference using \Cref{Frobcoordinate}.

\begin{notation}\label{pushforward_series}
For a power series $g(z) = g_0 + g_1z  + \dots$ over a ring $R$ and a ring map $\psi \colon R\to R'$ we denote by $\psi(g)(z)$ the power series $\psi(g_0) + \psi(g_1)z + \dots$, i.e., the power series obtained by applying $\psi$ to all the coefficients of $g(z)$. 
\end{notation}

\begin{prop}\label{frob_commute_formula}   
 Let $u:\MU\to A$ be an $\E_\infty$ ring map with coordinate $x$. Let $\phi:\MU\to A$ a homotopy ring map and $f(\xa)\in A_*[\![x]\!]$ defined by $\phi(\xu)=f(x)$ be as above. Let $p$ be any prime. Then in $A_*(\!(\ta)\!)/([p]_{\Fa }(\ta))[\![\xa]\!]$, the element $\Fr^A(\phi(\xu))-\phi^{tC_p}(\Fr^\MU(\xu))$ is given by 
\[
\Fr^A(f)\left( \xa\cdot \prod_{k=1}^{p-1}\frac{ \xa+_{\Fa }[k]_{\Fa }(\ta)}{[k]_{\Fa }(\ta)} \right)-f(\xa)\cdot\prod_{k=1}^{p-1}\frac{f(\xa+_{\Fa }[k]_{\Fa }(\ta))}{f([k]_{\Fa }(\ta))}.
\]
\end{prop}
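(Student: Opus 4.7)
The plan is to compute the two terms $\Fr^A(\phi(\xu))$ and $\phi^{tC_p}(\Fr^\MU(\xu))$ in $(A^{tC_p})^*\CP^\infty$ separately, using the explicit formula for $\Fr^\MU(\xu)$ supplied by Corollary \ref{Frobcoordinate} together with the naturality of the Frobenius along the $\EE_\infty$ ring map $u$, and then subtract.

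For the second term I would start from $\Fr^\MU(\xu) = \xu\prod_{k=1}^{p-1}\frac{\xu+_\Fu [k]_\Fu(\tu)}{[k]_\Fu(\tu)}$ and push it through the homotopy ring map $\phi^{tC_p}$. Naturality of the units $(-) \to (-)^{tC_p}$ identifies $\phi^{tC_p}(\xu) = \phi(\xu) = f(\xa)$ and $\phi^{tC_p}(\tu) = \phi(\tu) = f(\ta)$ (the latter via restriction along $BC_p \to \CP^\infty$), and $\phi^{tC_p}$ sends the universal formal group law $\Fu$ to its pushforward $\phi_*\Fu$, which is the formal group law on $A_*$ classified by the coordinate $\phi(\xu) = f(\xa)$. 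Since $\xa$ and $f(\xa)$ are both coordinates on $A$, the power series $f$ is an isomorphism of formal group laws $\Fa \to \phi_*\Fu$, giving in particular $[k]_{\phi_*\Fu}(f(\ta)) = f([k]_\Fa(\ta))$ and $f(\xa) +_{\phi_*\Fu} f([k]_\Fa(\ta)) = f(\xa +_\Fa [k]_\Fa(\ta))$. Substituting into the formula yields $f(\xa)\prod_{k=1}^{p-1} \frac{f(\xa +_\Fa [k]_\Fa(\ta))}{f([k]_\Fa(\ta))}$, the second term of the claim.

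For the first term I would compute $\Fr^A(\phi(\xu)) = \Fr^A(f(\xa))$ using that $\Fr^A$ is a ring map continuous in the $\xa$-adic topology, so $\Fr^A(f(\xa)) = \Fr^A(f)(\Fr^A(\xa))$ in the sense of Notation \ref{pushforward_series}. The value of $\Fr^A(\xa)$ comes from naturality of the Frobenius along the $\EE_\infty$ orientation $u$: since $\Fr^A \circ u = u^{tC_p} \circ \Fr^\MU$, pushing the formula of Corollary \ref{Frobcoordinate} through $u^{tC_p}$ (which sends $\xu\mapsto\xa$, $\tu\mapsto\ta$, $\Fu\mapsto\Fa$) gives $\Fr^A(\xa) = \xa\prod_{k=1}^{p-1}\frac{\xa +_\Fa [k]_\Fa(\ta)}{[k]_\Fa(\ta)}$, reproducing the first term.

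The main subtlety lies in keeping track of the two formal group laws on $A_*$: one must note that $\phi^{tC_p}$ sends $\Fu$ to $\phi_*\Fu$ rather than to $\Fa$, and the simplification in the second term hinges on reading $f$ as an isomorphism of formal group laws $\Fa \to \phi_*\Fu$. Beyond this, the argument is a mechanical substitution into Corollary \ref{Frobcoordinate}.
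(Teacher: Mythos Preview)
Your proposal is correct and follows essentially the same route as the paper: both compute the two terms separately via Corollary~\ref{Frobcoordinate}, using naturality of the Frobenius along the $\E_\infty$ map $u$ for the first term and the behavior of $\phi$ on the formal group law for the second. The only cosmetic difference is that the paper isolates the identity $\phi(\xu +_\Fu [k]_\Fu(\tu)) = f(\xa +_{\Fa}[k]_{\Fa}(\ta))$ into a separate lemma proved by naturality of $\phi$ along the map $\rho_{k,\ell}\colon \CP^\infty\times\CP^\infty\to\CP^\infty$, whereas you phrase the same fact as ``$f$ is an isomorphism of formal group laws $\Fa\to\phi_*\Fu$''; these are the same statement.
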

\begin{proof}

We will expand $\Fr^A(\phi(\xu))$ and $\phi^{tC_p}(\Fr^\MU(\xu))$ using \Cref{Frobcoordinate}. We will drop the superscript on $\phi^{tC_p}$ to streamline notation.

\begin{itemize}
\item For the former, we have 
\[
\Fr^A(\phi(\xu)) = \Fr^A(f(\xa)) = \Fr^A(f)(\Fr^A(\xa)) = \Fr^A(f)(\phi(\Fr^\MU(\xu))) = \Fr^A(f)\left(\xa\cdot \prod_{k=1}^{p-1}\frac{\xa+_{\Fa }[k]_{\Fa }(\ta)}{[k]_{\Fa }(\ta)}\right).
\]
\item For the latter, we have
\[
\phi(\Fr^\MU(\xu)) = \phi(\xu\cdot \prod_{k=1}^{p-1} \frac{\xu+_\Fu  [k]_\Fu(\tu)}{[k]_\Fu(\tu)}) = \phi(\xu)\cdot \prod_{k=1}^{p-1} \frac{\phi(\xu+_\Fu  [k]_\Fu (\tu ))}{\phi([k]_\Fu (\tu ))}. 
\]
It remains to show that $\phi(\xu+_\Fu  [k]_\Fu (\tu )) = f(\xa+_{\Fa } [k]_{\Fa }(\ta))$ and that $\phi([k]_{F}(\tu )) = f([k]_{\Fa }(\ta))$, which follows from \Cref{eq:phi_on_coordinates_sum}.
\end{itemize}
\end{proof}

\begin{lem}\label{eq:phi_on_coordinates_sum}
Let $\xu,\yu$ (resp. $\xa,\ya$) be the two pullbacks of the coordinate $\xu\in \MU^*\CP^\infty$ (resp. $\xa\in A^*\CP^\infty$) along the two projection maps $\CP^\infty\times \CP^\infty\to\CP^\infty$, so that 
\[
\MU^*(\CP^\infty \times \CP^\infty) \cong \MU_*[\![\xu ,\yu ]\!],\ \ \text{(resp. }\  
A^*(\CP^\infty \times \CP^\infty) \cong A_*[\![\xa,\ya]\!]). 
\]
Then, for all $k,\ell \in \ZZ$ we have 
\[
\phi\Big([k]_\Fu (\xu) +_\Fu  [\ell]_\Fu (\yu)\Big) = f\Big([k]_{\Fa }(\xa)+_{\Fa } [\ell]_{\Fa }(\ya)\Big) 
\]
in $A^*(\CP^\infty\times \CP^\infty)$. 
\end{lem}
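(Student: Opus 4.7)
The key observation is that $f$ realizes a strict isomorphism of formal group laws on $A$ from $\Fa$ to the pushforward $\phi(\Fu)$ (in the sense of \Cref{pushforward_series}). Granting this, the lemma is a routine unpacking: $f$ intertwines the addition and hence all $[k]$-series, and the coordinate translation $\xa \leftrightarrow f(\xa)=\phi(\xu)$ is exactly what relates the two sides of the desired identity.

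To prove the intertwining relation, I would apply $\phi$ to the defining identity $\mu^*\xu = \xu+_\Fu\yu$ in $\MU^*(\CP^\infty\times\CP^\infty)$, where $\mu\colon \CP^\infty\times\CP^\infty \to \CP^\infty$ is the H-space multiplication. Naturality of $\phi$ along $\mu$ rewrites the left-hand side as $\mu^*\phi(\xu) = \mu^* f(\xa) = f(\mu^*\xa) = f(\xa+_\Fa\ya)$, while the right-hand side rewrites (using that $\phi$ is a ring map that sends $\xu \mapsto f(\xa)$, $\yu \mapsto f(\ya)$ and pushes coefficients $\MU_* \to A_*$ forward) as $f(\xa)+_{\phi(\Fu)} f(\ya)$. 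Equating the two yields $f(\xa+_\Fa\ya) = f(\xa)+_{\phi(\Fu)} f(\ya)$. An easy induction on $|k|$, using the recursion $[k+1]_F(z) = [k]_F(z) +_F z$ for $k\geq 0$ and the inverse series to handle $k<0$, then gives $f([k]_\Fa(\xa)) = [k]_{\phi(\Fu)}(f(\xa))$. Combining these intertwining relations, both sides of the lemma equal $[k]_{\phi(\Fu)}(f(\xa)) +_{\phi(\Fu)} [\ell]_{\phi(\Fu)}(f(\ya))$.

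There is no substantive obstacle; the argument is essentially bookkeeping. The only point of care is to keep the two formal group laws on $A$ straight: $\Fa$, coming from the $\E_\infty$ orientation $u$ with coordinate $\xa$, versus $\phi(\Fu)$, coming from the (a priori non-$\E_\infty$) homotopy ring map $\phi$ with coordinate $f(\xa) = \phi(\xu)$; the power series $f$ is precisely the change of coordinate translating between them.
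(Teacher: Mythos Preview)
Your proof is correct. The paper takes a slightly more direct route: rather than first establishing that $f$ is a strict isomorphism $\Fa \to \phi(\Fu)$ and then inducting on $|k|$ and $|\ell|$, it observes that $[k]_\Fu(\xu) +_\Fu [\ell]_\Fu(\yu) = \rho_{k,\ell}^*(\xu)$ for the single geometric map $\rho_{k,\ell}\colon \CP^\infty\times\CP^\infty \to \CP^\infty$ sending $(L,L')\mapsto L^{\otimes k}\otimes (L')^{\otimes\ell}$, and likewise in $A$-cohomology; naturality of $\phi$ along $\rho_{k,\ell}$ then yields the identity in one stroke. Your argument is the algebraic unpacking of the same naturality (applied only to $\mu = \rho_{1,1}$) followed by formal-group-law induction. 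What you gain is that the role of $f$ as a change of coordinate is made explicit, which is conceptually useful elsewhere; what the paper's version gains is that it avoids the case analysis on the sign of $k$ and the separate handling of the inverse series.
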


\begin{proof}
Consider the map $\rho_{k,\ell}\colon \CP^\infty\times \CP^\infty \to \CP^\infty$ given on complex lines  by $(L,L')\mapsto L^{\otimes k}\otimes (L')^{\otimes \ell}$. 
Then, the induced maps 
\[
\rho_{k,\ell,\MU}^*\colon \MU^*(\CP^\infty) \to \MU^*(\CP^\infty\times \CP^\infty)
\]
\[
\rho_{k,\ell,A}^*\colon A^*(\CP^\infty) \to A^*(\CP^\infty\times \CP^\infty)
\]
satisfy 
\[
\rho_{k,\ell,\MU}^*(\xu) = [k]_\Fu (\xu) +_\Fu  [\ell]_\Fu (\yu).
\]
\[
\rho_{k,\ell,A}^*(\xa) = [k]_{\Fa }(\xa) +_{\Fa } [\ell]_{\Fa }(\ya).
\]
Since $\phi$ is a morphism of spectra, it satisfies $\phi\circ \rho_{k,\ell,\MU}^* = \rho_{k,\ell,A}^*\circ \phi$. Hence we get
\[
\phi([k]_\Fu (\xu) +_\Fu  [\ell]_\Fu (\yu)) = \phi\circ  \rho_{k,\ell,\MU}^*(\xu) = \rho_{k,\ell,A}^*f(\xa) = f(\rho_{k,\ell,A}^*(\xa)) = f([k]_{\Fa }(\xa) +_{\Fa } [\ell]_\Fu (\ya)).
\]
\end{proof}

\section{Lifts of Frobenius}

In this section we produces some ``integral lifts'' of Frobenius. $\E_\infty$ rings with such lifts are called \emph{cyclotomic} $\E_\infty$ rings by Hahn-Devalapurkar-Raksit-Yuan in forthcoming work. We show that $\MU$ is a cyclotomic $\E_\infty$ ring (a result which is known to the aforementioned four people). Moreover, we show that as such, $\MU$ admits rather few endomorphisms.

The basic observation is that the regular representation $\rho$ of $C_p$ is the restriction (along the inclusion $C_p\to \T$) of a representation of $\T$ which fits the criteria for \Cref{sharp_construction}. This puts us in the situation of the following lemma.
\begin{lem}\label{lem_compare_sharps}
    Fix $n>0$. Let $i:G_1\to G_2$ be a map of compact Lie groups. Let $G_1, \V_1, \sF_1$ and $G_2, \V_2, \sF_2$ be data satisfying the conditions of \Cref{sharp_construction}, and such that $\sF_1=i^*\sF_2$, $\V _1=i^*\V_2$. Then for every $\omega:\MU\l n\r\to R$ there is a commutative diagram in $\calg$
    \[\begin{tikzcd}
    & &R^{t_{\sF_2} G_2}\arrow[d, "i^*"] \\
   \MU\l n\r\arrow[urr,"\#_{\V_2}(\omega)"]\arrow[rr,"\#_{\V_1}(\omega)"'] & & R^{t_{\sF_1} G_1}\\
\end{tikzcd}\]
\end{lem}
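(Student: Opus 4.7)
The plan is to prove the lemma by showing that \Cref{sharp_construction} is natural in the triple $(G, \V, \sF)$ along maps $i\colon G_1 \to G_2$ that satisfy the compatibility $\V_1 = i^*\V_2$ and $\sF_1 = i^*\sF_2$. Given this naturality, the nullhomotopy producing $\#_{\V_2}(\omega)$ will, after post-composition with the restriction $i^*$, agree canonically with the nullhomotopy producing $\#_{\V_1}(\omega)$. Passing back through the equivalence between nullhomotopies of the $j$-homomorphism and $\E_\infty$ ring maps of $\MU\l n\r$ (cf.\ \Cref{remark_generalized_sharp}) then yields the asserted commutative triangle in $\calg$.

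First, I would verify that the restriction functor $i^*\colon \Perf(R)^{BG_2} \to \Perf(R)^{BG_1}$ is symmetric monoidal and sends the thick tensor ideal $\Perf(R)[\sF_2]$ into $\Perf(R)[\sF_1]$. Given $H \in \sF_2$, the restriction $i^*(G_2/H)$ decomposes as a disjoint union of $G_1$-orbits $G_1/K$, where each stabilizer has the form $K = i^{-1}(gHg^{-1})$ for some $g \in G_2$. Then $i(K) \subseteq gHg^{-1}$ lies in $\sF_2$ (using closure of $\sF_2$ under conjugation and subgroups), so $K \in \sF_1$ by the hypothesis $\sF_1 = i^*\sF_2$. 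Consequently $i^*$ descends to a symmetric monoidal functor between the Verdier quotients $\Perf(R)^{t_{\sF_2} G_2} \to \Perf(R)^{t_{\sF_1} G_1}$, and hence induces a map of Picard spaces $\pic(R)^{t_{\sF_2} G_2} \to \pic(R)^{t_{\sF_1} G_1}$ which on unit objects recovers the canonical restriction $R^{t_{\sF_2} G_2} \to R^{t_{\sF_1} G_1}$.

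Next, I would assemble the naturality of Display (\ref{display_sharp}) with respect to $i$. The leftmost square, comparing $\V_2 \otimes (-)$ and $\V_1 \otimes (-)$, commutes because $\V_1 = i^*\V_2$; the middle square is the naturality of the $j$-homomorphism under change of group; and the rightmost square, expressing compatibility of $T_{\sF_2}$ and $T_{\sF_1}$ with $i^*$, is the content of the previous step. Splicing these three squares together gives a commutative rectangle whose top row is the $(G_2, \V_2, \sF_2)$-version of Display (\ref{display_sharp}), whose bottom row is the $(G_1, \V_1, \sF_1)$-version, and whose rightmost vertical map is the symmetric monoidal $i^*$. Consequently, the nullhomotopy $T_{\sF_2}\circ \omega^{BG_2}\circ (\V_2\otimes (-))$ defining $\#_{\V_2}(\omega)$ post-composes along $i^*$ to the nullhomotopy $T_{\sF_1}\circ \omega^{BG_1}\circ (\V_1\otimes (-))$ defining $\#_{\V_1}(\omega)$, which is exactly the asserted factorization.

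The main technical obstacle is the first step: showing that $i^*$ respects the thick tensor ideals generated by the $\sF$-induced objects, since this is what makes the categorical Tate construction behave functorially in the group-family pair. Once this is in place, the rest of the argument is formal naturality of functorially defined diagrams, and the generalization to $\MU\l n\r$ requires no new input beyond replacing $\bu$ by $\bu\l n\r$ throughout, as in \Cref{remark_generalized_sharp}.
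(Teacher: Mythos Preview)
Your proposal is correct and follows essentially the same route as the paper: build the commutative ladder between the two instances of Display~(\ref{display_sharp}) induced by $i^*$, and conclude that the nullhomotopy defining $\#_{\V_2}(\omega)$ transports under $i^*$ to the one defining $\#_{\V_1}(\omega)$. You supply additional detail the paper omits---namely the verification that $i^*$ carries $\Perf(R)[\sF_2]$ into $\Perf(R)[\sF_1]$ and hence descends to the Verdier quotients---but the strategy is identical.
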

\begin{proof}
    Because of the assumptions on the data $G_1, \V_1, \sF_1$ and $G_2, \V_2, \sF_2$,  the homomorphism $i$ induces a map between instances of Display (\ref{display_sharp}) (cf. also \Cref{remark_generalized_sharp}), presented by the following commutative diagram
    \[\begin{tikzcd}
\bu\l n\r \arrow[r,"\V_2\otimes(-)"]\arrow[d]&\bu\l n\r^{BG_2}\arrow[rr, "j_R^{BG_2}"]\arrow[d, "i^*"]& &\mathrm{pic}(R)^{BG_2}\arrow[r,"T_{\sF_2}"]\arrow[d, "i^*"]& \mathrm{pic}(R)^{t_{\sF_2} G_2}\arrow[d, "i^*"] \\
\bu\l n\r\arrow[r,"\V_1\otimes(-)"]&\bu\l n\r^{BG_1}\arrow[rr, "j_R^{BG_1}"]& &\mathrm{pic}(R)^{BG_1}\arrow[r,"T_{\sF_1}"]& \mathrm{pic}(R)^{t_{\sF_1} G_1} \\
\end{tikzcd}.\]
Thus the nullhomotopies defining $\#_{\V_1}(\omega)$ and $i^*\circ\#_{\V_2}(\omega)$ are homotopic via the homotopy witnessing the commutativity of the diagram.
\end{proof}

As a special case, we get our desired lifts of Frobenius. For each prime $p$, let $\pi_p$ be the representation of $\T$ given by 
\[\pi_p=\C\oplus L\oplus L^2\oplus...\oplus L^{p-1}.\]

\begin{thm}\label{thm_lift_of_Frob}
    Let $i:C_p\to \T$ be the inclusion. Let $\sF_p$ be the family of subgroups of $\T$ given by $\{1,C_2,...,C_{p-1}\}$. There is a commutative diagram in $\calg$
    \[\begin{tikzcd}
    & &\MU\l n\r^{t_{\sF_p}\T}\arrow[d, "i^*"] \\
   \MU\l n\r\arrow[urr,"\#_{\pi_p}"]\arrow[rr,"\Fr"'] & & \MU\l n\r^{tC_p}\\
\end{tikzcd}.\]
\end{thm}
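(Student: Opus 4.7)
The plan is to deduce the theorem as a direct application of \Cref{lem_compare_sharps} and (the $\MU\langle n\rangle$-version of) \Cref{Frob_is_sharp}. Specifically, I would instantiate the lemma along the inclusion $i\colon C_p\to \T$ with the data
\[
(G_1,\V_1,\sF_1)=(C_p,\rho,\{1\}),\qquad (G_2,\V_2,\sF_2)=(\T,\pi_p,\sF_p),
\]
and with $\omega$ taken to be $\id_{\MU\langle n\rangle}$.

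To apply the lemma, I would verify its two hypotheses. For $i^*\V_2=\V_1$: writing $L$ for the standard character of $\T$, its restriction along $i$ is the standard character $\xi$ of $C_p$, so
\[
i^*\pi_p = \CC\oplus \xi\oplus \xi^2\oplus\cdots\oplus\xi^{p-1},
\]
which is exactly the complex regular representation $\rho$. For $i^*\sF_2=\sF_1$: since $p$ is prime the only subgroups of $C_p$ are $\{1\}$ and $C_p$; the trivial subgroup lies in $\sF_p$ while $C_p$ does not (the family $\sF_p$ stops at $C_{p-1}$), so $i^{-1}\sF_p=\{1\}$. I would also check en passant that $\sF_p$ is indeed the family attached to $\overline{\pi_p}=L\oplus\cdots\oplus L^{p-1}$ by the setup preceding \Cref{sharp_construction}: a closed subgroup $H\leq \T$ is either $C_k$ or $\T$, and $L^j|_{C_k}$ is trivial iff $k\mid j$, so among closed subgroups the ones for which some $L^j$ ($1\le j\le p-1$) restricts trivially are exactly $\{1\},C_2,\ldots,C_{p-1}$.

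With these verifications, \Cref{lem_compare_sharps} produces the commutative triangle
\[
\begin{tikzcd}
 & & \MU\langle n\rangle^{t_{\sF_p}\T}\arrow[d,"i^*"]\\
\MU\langle n\rangle\arrow[urr,"\#_{\pi_p}(\id)"]\arrow[rr,"\#_\rho(\id)"'] & & \MU\langle n\rangle^{tC_p}.
\end{tikzcd}
\]
The final step is to identify the bottom arrow with the Frobenius. This is \Cref{Frob_is_sharp} for $\MU$; for $\MU\langle n\rangle$ it follows from the same argument (cf.\ \Cref{remark_generalized_sharp}), replacing $\bu$ by $\bu\langle n\rangle$ and $\MU$ by $\MU\langle n\rangle$ in the diagram in the proof of \Cref{Frob_is_sharp}, since the bottom row of that diagram only depends on the $\EE_\infty$ ring in question via the universal property of its Picard groupoid, so the reference to \cite[Corollary 2.14]{CarmeliPic} still identifies the bottom composite with $\Fr$.

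Practically speaking there is no serious obstacle: the entire content of the theorem is packaged into the earlier lemma, and the verification is a short representation-theoretic check together with the (essentially formal) extension of \Cref{Frob_is_sharp} from $\MU$ to $\MU\langle n\rangle$. The only substantive idea is the observation that $\rho$ is the restriction of the $\T$-representation $\pi_p$ whose nontrivial summands have fixed-points controlled precisely by the family $\sF_p$, which is what makes the Tate construction over $\T$ with respect to $\sF_p$ the natural home for an integral lift of the $C_p$-Frobenius.
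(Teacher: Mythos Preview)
Your proposal is correct and follows essentially the same approach as the paper's: apply \Cref{lem_compare_sharps} with the indicated data and then identify $\#_\rho$ with $\Fr$ via \Cref{Frob_is_sharp}. You are more thorough in verifying the lemma's hypotheses and in extending \Cref{Frob_is_sharp} to $\MU\langle n\rangle$; incidentally, your assignment of $(G_1,\V_1,\sF_1)$ and $(G_2,\V_2,\sF_2)$ is the correct one consistent with $i\colon G_1\to G_2$ (the paper's proof has the labels swapped, which is evidently a typo).
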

\begin{proof}
    Apply \Cref{lem_compare_sharps} with $(G_1,\V_1,\sF_1)=(\T, \pi_p, \sF_p)$,  $(G_2,\V_2,\sF_2)=(C_p, \rho, \{1\})$, and $i:C_p\to \T$ the inclusion. Then use \Cref{Frob_is_sharp} to replace $\#_\rho$ with $\Fr$.
\end{proof}

We immediately obtain the following corollary.

\begin{cor}\label{lift_Frob_p_localization}
    The Frobenius map $\MU\to\MU^{tC_p}$ factors through the canonical diagram 
    \[(\MU^{t\T})_{(p)}\to(\MU^{t\T})_{p}\to\MU^{tC_p}\]
    via the map $\#_{\pi_p}$ of \Cref{thm_lift_of_Frob}.
\end{cor}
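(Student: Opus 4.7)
The plan is to post-compose the map $\#_{\pi_p}\colon \MU \to \MU^{t_{\sF_p}\T}$ of \Cref{thm_lift_of_Frob} with a canonical $\E_\infty$ ring map to $(\MU^{t\T})_{(p)}$, and then verify that the resulting composition through the canonical diagram recovers the Frobenius. The first key observation is that since $\sF_p$ is contained in the family of all proper closed subgroups of $\T$, there is a canonical $\E_\infty$ ring map $\MU^{t_{\sF_p}\T} \to \MU^{t\T}$, obtained at the categorical Tate level by further quotienting by the larger thick tensor ideal. Composing with $p$-localization yields an $\E_\infty$ ring map $\MU^{t_{\sF_p}\T} \to (\MU^{t\T})_{(p)}$, and precomposing with $\#_{\pi_p}$ produces the candidate lift $\MU \to (\MU^{t\T})_{(p)}$.

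Second, to check that the full composition
\[
\MU \to (\MU^{t\T})_{(p)} \to (\MU^{t\T})_p \to \MU^{tC_p}
\]
coincides with $\Fr$, I would invoke \Cref{thm_lift_of_Frob}, which already gives $\Fr = i^*\circ \#_{\pi_p}$. It then suffices to verify that the induced map $\MU^{t_{\sF_p}\T} \to (\MU^{t\T})_{(p)} \to (\MU^{t\T})_p \to \MU^{tC_p}$ is homotopic to the restriction $i^*$. A key input is that $\MU^{tC_p}$ is $p$-local: as a module over the $p$-complete $\S^{tC_p}$, every prime $\ell \neq p$ acts invertibly, so $i^*$ factors uniquely through the $p$-localization of its source. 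The identification with the candidate composition above then follows from the naturality of Tate constructions under the inclusion $i\colon C_p \hookrightarrow \T$, combined with the change-of-family map.

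The hard part I anticipate is in pinning down the canonical map $(\MU^{t\T})_p \to \MU^{tC_p}$ used in the statement of the corollary. There is no naive integral restriction map $\MU^{t\T}\to \MU^{tC_p}$, since a cell $\T/C_p$ is killed in $\MU^{t\T}$ but has nonzero $C_p$-Tate when restricted to $C_p$; only after $p$-completion does a canonical restriction map exist, via a Tate orbit lemma style argument. Once this canonical choice is fixed, its agreement with $i^*$ along the map from $\MU^{t_{\sF_p}\T}$ becomes a formal consequence of universality and the naturality discussed above.
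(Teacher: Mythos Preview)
Your first step contains a genuine error. The unadorned $\MU^{t\T}$ in the paper (and in the cyclotomic-spectra literature) is the Tate construction for the trivial family, with $\pi_*\MU^{t\T} \cong \MU_*(\!(\tu)\!)$; it is \emph{not} the Tate for the family of all proper subgroups. Since $\{1\} \subset \sF_p$, the Verdier-quotient map runs in the opposite direction, $\MU^{t\T} \to \MU^{t_{\sF_p}\T}$, not the one you claim. Concretely: $\MU^{t_{\sF_p}\T}$ is obtained from $\MU^{B\T}$ by inverting $[k]_\Fu(\tu)$ for $1 \le k \le p-1$; since $[k]_\Fu(\tu)/\tu = k + O(\tu)$ with $\tu$ topologically nilpotent, this forces the integers $2, \ldots, p-1$ to be units. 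None of these are units in $\MU_*(\!(\tu)\!)$, so no $\E_\infty$ ring map $\MU^{t_{\sF_p}\T} \to \MU^{t\T}$ can exist before localization. (Had $\MU^{t\T}$ meant the proper-family Tate, it would be a $\QQ$-algebra and the whole statement would collapse, since $\MU^{tC_p}$ is rationally trivial.)

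The paper's argument is exactly this computation turned into a construction: recognizing $\MU^{t_{\sF_p}\T}$ as the $\E_\infty$ localization of $\MU^{B\T}$ at $\tu, 2, \ldots, p-1$, and observing that all of these are already units in $(\MU^{t\T})_{(p)}$, the universal property of localization directly produces the desired map $\MU^{t_{\sF_p}\T} \to (\MU^{t\T})_{(p)}$. Your final-paragraph worry then also dissolves: under the correct convention the orbit $\T/C_p$ has isotropy $C_p \notin \{1\}$ and is \emph{not} killed in $\MU^{t\T}$, so an honest restriction $\MU^{t\T} \to \MU^{tC_p}$ exists without any completion, and its compatibility with $i^*$ out of $\MU^{t_{\sF_p}\T}$ follows because both are maps under the common localization source $\MU^{B\T}$.
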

\begin{proof}
    We give the proof for the $p$-localization, as the proof for the $p$-completion is analogous. As an $\E_\infty$ algebra, $\MU^{t_{\sF_p}\T}$ is gotten from $\MU^{B\T}$ by inverting $\tu,[2]_\Fu (\tu ),...,[p-1]_\Fu (\tu )$. On the other hand, $[k]_\Fu (\tu )=k\tu + O(\tu ^2)$ and $\tu$ is topologically nilpotent, so that inverting $\tu$, 2, ..., $p-1$ has the same effect. This list is invertible in the $p$-localization $(\MU^{t\T})_{(p)}$, so we get a ring map $\MU^{t_{\sF_p}\T}\to(\MU^{t\T})_{(p)}$ which, when composed with the canonical map $(\MU^{t\T})_{(p)}\to \MU^{tC_p}$, coincides with $i^*$.
\end{proof}
\begin{rem}
    Note that at $p=2$, the numbers $1,...,p-1$ are already invertible, so one does not even need $2$-localization or 2-completion---the Frobenius factor through $\MU^{t\T}$.
\end{rem}
\begin{rem}
    Recall that for any $\E_\infty$ ring $R$ the Frobenius $\Fr_R:R\to R^{tC_p}$ factors (naturally and canonically) through the $\F_p^\times$ fixed points $(R^{tC_p})^{h\F_p^\times}\to R^{tC_p}$ (\cite[IV.1.4]{NikolausScholze}). It is thus natural to ask if a similar result holds for these lifts of Frobenius. 
    Although $\MU^{t\T}_p$ does admit a $\F_p^\times$-action via roots of unity $\F_p^\times\subset \Z_p^\times$ which is compatible with the map $\MU^{t\T}_p\to \MU^{tC_p}$, we cannot hope to factor our lift of Frobenius $\#_\rho$ through $(\MU^{t\T}_p)^{h\F_p^\times}$. Indeed, a root of unity $\xi$ sends $[k]_\Fu(\tu)$ to $[\xi\cdot k]_\Fu(\tu)$ and the formula of \Cref{Frobcoordinate} is clearly not invariant under this unless one is working modulo $[p]_\Fu(\tu)$. 
\end{rem}

We do get a weak form of $\F_p^\times$-invariance, which we record for later use. 

\begin{defn}\label{defn_Ip}
    For each prime $p$ define the $\E_\infty$ ring $I_p$ via the pullback diagram
\[\begin{tikzcd}
    I_p\arrow[r]\arrow[d]&(\MU^{tC_p})^{h\F_p^\times}\arrow[d] \\
    \MU^{t\T}_p\arrow[r]& \MU^{tC_p}.\\
\end{tikzcd}\]
\end{defn}
Note that $\pi_*I_p$ is isomorphic to the subgroup of $\pi_*MU^{t\T}_p$ consisting of elements which are $\F_p^\times$-invariant modulo $[p]_\Fu(\tu)$.

\begin{prop}\label{lift_Frob_Fptimes_inv}
    The lifts of Frobenius $\#_\rho:\MU\to \MU^{t\T}_p$ factor canonically through
    $I_p\to \MU^{t\T}_p$.
\end{prop}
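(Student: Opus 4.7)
The plan is to invoke the universal property of the pullback $I_p$ from Definition \ref{defn_Ip} directly. A factorization of $\#_{\pi_p}\colon \MU \to \MU^{t\T}_p$ through $I_p$ amounts to the data of a canonical map $\widetilde{\Fr}\colon \MU \to (\MU^{tC_p})^{h\F_p^\times}$ together with a canonical homotopy in $\calg(\MU, \MU^{tC_p})$ between the underlying map of $\widetilde{\Fr}$ and the composite $\MU \xrightarrow{\#_{\pi_p}} \MU^{t\T}_p \to \MU^{tC_p}$.

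The first piece is exactly Nikolaus-Scholze's refinement \cite[IV.1.4]{NikolausScholze}: the Frobenius $\Fr\colon \MU \to \MU^{tC_p}$ canonically factors through the $\F_p^\times$-fixed-point spectrum, with $\F_p^\times$ acting trivially on the source $\MU$. The second piece is supplied by Theorem \ref{thm_lift_of_Frob} together with Corollary \ref{lift_Frob_p_localization}: these give a canonical identification, as $\EE_\infty$-ring maps, between the composite $\MU \xrightarrow{\#_{\pi_p}} \MU^{t\T}_p \to \MU^{tC_p}$ and the Frobenius $\Fr$. Since the underlying map of $\widetilde{\Fr}$ is also $\Fr$, these two homotopies compose to give the required witness, and feeding the resulting data into the universal property of the pullback produces the desired canonical map $\MU \to I_p$ lifting $\#_{\pi_p}$.

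Once the preceding results are in hand, the argument is essentially formal. The only point that might raise a coherence concern is the compatibility of the two canonical paths that identify the projection to $\MU^{tC_p}$ with $\Fr$: one via the $\sharp$ construction at $\rho$ (Theorem \ref{Frob_is_sharp}, together with Lemma \ref{lem_compare_sharps} applied to the inclusion $C_p\hookrightarrow \T$), and one via the Nikolaus-Scholze $\F_p^\times$-equivariant refinement. Both are produced by functorial constructions applied to the single universal map $\Fr$, so the compatibility is automatic and there is no genuine obstacle to verify.
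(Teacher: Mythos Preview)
Your proposal is correct and follows essentially the same route as the paper's own proof: invoke the pullback universal property of $I_p$, supply the map to $(\MU^{tC_p})^{h\F_p^\times}$ via the Nikolaus--Scholze $\F_p^\times$-refined Frobenius \cite[IV.1.4]{NikolausScholze}, supply the map to $\MU^{t\T}_p$ via $\#_{\pi_p}$, and observe that both composites to $\MU^{tC_p}$ are canonically identified with $\Fr$ (the latter by \Cref{thm_lift_of_Frob} and \Cref{lift_Frob_p_localization}). The paper's version compresses all of this into a single sentence.

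Your final paragraph overcomplicates matters slightly: there is no coherence condition to check beyond producing \emph{some} homotopy between the two composites to $\MU^{tC_p}$, and the concatenation of the two canonical identifications with $\Fr$ already furnishes one. No further compatibility between the two sources of this identification is required for the pullback universal property.
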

\begin{proof}
    This follows immediately from the definition of $I_p$ as a pullback, the lifts of Frobenius of \Cref{thm_lift_of_Frob}, and the $\F_p^\times$-invariance of the Frobenius of \cite[IV.1.4]{NikolausScholze}.
\end{proof}

Following Hahn-Devalapurkar-Raksit-Yuan and others, we make the following definition.

\begin{defn}(cf. \cite[Definition 3.2.1]{HRW})
    An $\E_\infty$ ring $R$ is called a \emph{cyclotomic base} if it equipped with factorizations $\psi_p$ as in the diagram
    \[\begin{tikzcd}
         & (R^{t\T})_p\arrow[d]\\
        R\arrow[ru,dashed,"\psi_p"]\arrow[r,"\Fr"] &R^{tC_p}. \\
    \end{tikzcd}\]
    A \emph{cyclotomic base map} $\phi:R\to R'$ is one which commutes with the lifts of Frobenius---that is $\psi_p\circ\phi \simeq \phi^{t\T}\circ \psi_p$.
\end{defn}

Thus \Cref{lift_Frob_p_localization} implies that $\MU$ is naturally a cyclotomic base, which is an unpublished result of Hahn-Devalapurkar-Raksit-Wilson. Using our formulas for $\#$ maps (\Cref{eulertate_formula}), we obtain the following cyclotomic rigidity result.

\begin{thm}\label{cyclo_rigidity}
    If $\phi:\MU\to \MU$ is a cyclotomic base map, then its underlying homotopy ring map is the identity.
\end{thm}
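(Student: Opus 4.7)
The plan is to show that $f(\xu) := \phi(\xu) \in \MU^*\CP^\infty$, expanded as $f(\xu) = \xu + \sum_{i \geq 2} a_i \xu^i$ with $a_i \in \MU_{2(i-1)}$, must equal $\xu$. I will work at the prime $p = 2$, where the cyclotomic base condition $\psi_2 \circ \phi \simeq \phi^{t\T} \circ \psi_2$ translates, at the level of the induced coordinates on $\CP^\infty$, into the identity in $(\MU^{t\T}_2)^*\CP^\infty$
\[
\psi_2(f(\xu)) \;=\; f(\xu)\cdot \frac{f(\xu +_{\Fu}\tu)}{f(\tu)},
\]
using the formula $\psi_2(\xu) = \xu(\xu +_\Fu \tu)/\tu$ (an instance of \Cref{Frobcoordinate} lifted to $\MU^{t\T}_2$) together with the formal-group-law isomorphism property of $f$, in parallel with the derivation of \Cref{frob_commute_formula}.

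Assume for contradiction that $f \neq \id$ and let $n \geq 2$ be the minimal index with $a_n \neq 0$. The core step is to compare the coefficient of $\xu^2$ on both sides. Setting $A(\tu) := \partial_\xu \Fu(0, \tu) = 1 + \sum_{j \geq 1} a_{1,j}\tu^j$, Taylor-expanding in $\xu$ yields the LHS $\xu^2$-coefficient $A(\tu)/\tu$ (plus $\psi_2(a_2)$ when $n = 2$), and the RHS $\xu^2$-coefficient $A(\tu) f'(\tu)/f(\tu)$ (plus $a_2$ when $n = 2$).

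When $n \geq 3$, the matching, after canceling the unit $A(\tu) = 1 + O(\tu)$, reduces to the ODE $\tu f'(\tu) = f(\tu)$ in $(\MU_*)_2[\![\tu]\!]$, whose unique solution with $f(\tu) = \tu + O(\tu^2)$ is $f(\tu) = \tu$ --- contradicting $a_n \neq 0$. When $n = 2$, the matching instead yields $\psi_2(a_2) = 2 a_2 + O(\tu)$ in $\pi_2 \MU^{t\T}_2$. I will independently compute $\psi_2$ on the formal-group-law coefficient $a_{1,1} \in \MU_2$ by expanding the formal-group-law isomorphism $\psi_2(\Fu)(\psi_2(\xu), \psi_2(\yu)) = \psi_2(\Fu(\xu, \yu))$ and matching the coefficient of $\xu \yu$, obtaining $\psi_2(a_{1,1}) = a_{1,1} + 2A(\tu)/\tu = 3 a_{1,1} + 2\tu^{-1} + O(\tu)$. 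Since $\MU_2 \cong \Z \cdot a_{1,1}$, writing $a_2 = c \cdot a_{1,1}$ with $c \in \Z$ and using $\Z$-linearity of $\psi_2$ gives $\psi_2(a_2) = 3 a_2 + 2c \tu^{-1} + O(\tu)$; matching with $\psi_2(a_2) = 2 a_2 + O(\tu)$ at the $\tu^0$-coefficient forces $a_2 = 0$ --- contradiction.

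The hardest step is expected to be the direct computation of $\psi_2$ on the formal-group-law coefficient $a_{1,1}$ via the coordinate-change isomorphism arising from $\psi_2(\xu) = \xu(\xu +_\Fu \tu)/\tu$; the remaining steps reduce to formal power-series manipulations and the torsion-freeness of $\MU_*$.
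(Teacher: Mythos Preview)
Your proposal is correct and follows essentially the same route as the paper's proof. Both arguments work at $p=2$, pass to the identity $(\psi_2 f)\big(\psi_2(\xu)\big)=f(\xu)\cdot f(\xu+_{\Fu}\tu)/f(\tu)$, extract the $\xu^2$-coefficient, compute $\psi_2(a_{1,1})$ from the coordinate-change formula for the pushed-forward formal group law, and finish by combining the resulting constraint on $a_2$ with the ODE $\tu f'(\tu)=f(\tu)$ (using torsion-freeness of $\MU_*$). The only difference is organizational: you split into the cases $n=2$ and $n\ge 3$, while the paper derives a single equation $g_1(\tu)\big((2d+1)-\tu f'(\tu)/f(\tu)\big)=0$ (with $a_2=d\cdot a_{1,1}$) and reads off first $d=0$ from the $\tu^0$-term, then $a_i=0$ for all $i$ by induction from the remaining equation. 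Your two-case presentation is perhaps slightly cleaner, but the content is the same.
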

\begin{proof}
    Recall that $\xu$ denotes the universal coordinate on $\MU$, and that we also use it to denote the coordinate on $\MU^{t_\mathcal{F}G}$ induced by the unit map $\MU\to \MU^{t\mathcal{F}G}$.
    
    We must show that $\phi(\xu)=\xu$. As usual write $f(\xu)=\phi(\xu)$ for the power series expressing $\phi(\xu)$ in terms of $\xu$. Since $\phi$ is a cyclotomic self map of $\MU$, we have
    \[\psi_p\circ \phi(\xu)-\phi^{t\T}\circ\psi_p(\xu)=0.\]
    Now $\psi_p=\#_{\pi_p}$ by construction, and so by \Cref{eulertate_formula} (see also \ref{Frobcoordinate}) we have
    \[\psi_p(\xu)=\xu\cdot \tch_{\pi_p} = \xu\cdot \prod_{k=1}^{p-1}\frac{\xu+_\Fu [k]_\Fu (\tu )}{[k]_\Fu (\tu )}\in\MU_*(\!(t)\!)_p.\]
    Using this, we expand the formula $\psi_p\circ \phi(\xu)-\phi^{t\T}\circ\psi_p(\xu)=0$ precisely as in \Cref{frob_commute_formula} but now the ambient ring is $\MU_*(\!(\tu)\!)_p$ (as opposed to its quotient by $[p]_\Fu$) and find
    \[\psi_p(f)\left( \xu\cdot \prod_{k=1}^{p-1}\frac{ \xu+_{\Fu }[k]_{\Fu }(\tu)}{[k]_{\Fu }(\tu)} \right)-f(\xu)\cdot\prod_{k=1}^{p-1}\frac{f(\xu+_{\Fu }[k]_{\Fu }(\tu))}{f([k]_{\Fu }(\tu))}=0.\]
    We will take $p=2$ and explicitly extract the $\xu^2$ coefficient. Fix some notation: write 
    \begin{align*}
        \MU_*=&\ \Z[b_1,b_2,...] \\
        f(\xu)=&\ \xu+f_1\xu^2+f_2\xu^3+... \\
        g(\xu):=&\ \psi_2(\xu)= \xu+g_1(\tu)\xu^2 +g_2(\tu)\xu^3+... \\
    \end{align*}
    Thus e.g. $g_1(\tu)=\tu^{-1}+b_1+...$ is the coefficient of $\xu$ in $(\xu+_\Fu\tu)\tu^{-1}$.  Since $b_1$ is the coefficient of $\xu\yu$ in $\Fu(\xu,\yu)$, $\psi_2(b_1)$ is the coefficient of $\xu\yu$ in $(\psi_2)_*\Fu(\xu,\yu)=g(\Fu(g^{-1}(\xu),g^{-1}(\yu))$. This is $b_1+2g_1(\tu)$.  Next, observe that for degree reasons there is some $d\in\Z$ such that $f_1=db_1$. Putting this together, we find that the $\xu^2$ coefficient reads
    \[g_1(\tu)\left((-2d+1)+\frac{\sum_{j=0}(j+1)f_i\tu^j}{\sum_{j=0}f_i\tu^j}\right)=0.\]
    But from the $\tu$-expansion above and the fact that $\tu$ is topologically nilpotent we find that $g_1(\tu)$ is a unit, so we get
    \[2d+1=\frac{1+2f_1\tu+3f_2\tu^2+4f_3\tu^3...}{1+f_1\tu+f_2\tu^2+f_3\tu^3+...}.\]
    Comparing the $\tu^0$-coefficients shows that $d=0$, thus $f_1=0$. If $f_n$ is the first nonzero coefficient of $f$, then examining the $\tu^{n}$-coefficient of the previous display we see see that $nf_n=0$ so $f_n=0$. By induction we find that $f_n=0$ for all $n$ and we are done.

\end{proof}

\section{Obstruction theory for $\E_n$ orientations}\label{obstruction_section}
In this section we us the formula of \Cref{Frobcoordinate} to construct a general obstruction theory for $\E_n$ complex orientations of $\E_\infty$ rings. When applied to $p$-typical orientations, we use the calculations of Johnson-Noel \cite{JohnsonNoel} to get $\E_n$ obstructions for finite $n$. When applied to the Quillen idempotent specifically, we produce an computationally-compressed version of their obstruction and are able to achieve better bounds at small primes (cf. \Cref{compare_JN}). We then use this obstruction theory to prove the main results \Cref{thm_no_p2_E5} and \Cref{QI_p3}.

Recall that the logarithm of the universal formal group law $\Fu$ is the power series
\[\log_u(\xu)=\xu + m_1\frac{\xu^2}{2}+m_2\frac{\xu^3}{3}+m_3\frac{\xu^4}{4}+m_4\frac{\xu^5}{5}+...\]
where $m_i$ is the bordism class of $\CP^{i}$, a fact due to A. Mishchenko (cf. \cite[II Corollary 9.2]{Adams}). The logarithm of a formal group law $F_\gamma$ classified by a ring map $\gamma :\MU_*\to R_*$ is gotten by applying $\gamma$ to the coefficients of $\log_u$. The formal group law $F_\gamma$ is called \emph{$p$-typical} if $F_\gamma(m_i)=0$ when $i\neq p^j-1$.

\begin{notation}
Write $\rho$ for the regular representation of $C_p$. For $0\le n\le \infty$, write $\conf$ for the space of configurations of $p$ points in $\R^n$. For an $\EE_n$-ring $R$, we let 
\[
P_n^R\colon \Omega^\infty R \to \Omega^\infty R^{\conf_{hC_p}}
\]
be the $\EE_n$ total $p$-th power map. We use the same notation for the map in a general degree
\[
P_n^R\colon \Omega^\infty \Sigma^nR \to \Omega^\infty \left(\Sigma^{n\rho}R^{\conf}\right)^{hC_p}.
\]
\end{notation}

Thus, an $\EE_n$-map $\phi \colon R\to R'$ satisfies 
\[
\phi \circ P_n^R \simeq  P_n^{R'} \circ \phi.
\]
This gives the (very) classical first obstruction to an $\EE_n$-structure; for a general map $\phi \colon R\to R'$, if the difference $P_n^{R'}\circ \phi - \phi \circ P_n^R$ does not vanish on some class $a\in \pi_*R$, then the map $\phi$ cannot be upgraded to an $\EE_n$-map. Indeed, \cite{JohnsonNoel} carries out precisely this program for $n=\infty$ and $\phi$ a map inducing a $p$-typical formal group law. Motivated by their work, we make the following definition.

\begin{defn}\label{JN_obs_element}(cf. \cite[5.19]{JohnsonNoel})
    Let $\phi\colon \MU\to A$ be a homotopy ring map into an $\EE_\infty$-ring $A$. Write $[\CP^d]\in\MU_{2d}$ for the bordism class of projective space.  We define the \tdef{$d$-th JN-obstruction series} of $\phi$ to be the element 
\[
\JN_d := P_\infty^A(\phi([\CP^d])) - \phi(P_\infty^\MU([\CP^d])) \in A^{2d\rho}(BC_p\times \CP^\infty). 
\]
\end{defn}

Observe that we can evaluate $P_n^{R'}\circ \phi - \phi \circ P_n^R$ not only on elements of $R^*$, but on more general cohomology classes in $R^*X$ for a space $X$. 
When $R$ is complex orientable, there is a favorable choice of such class, namely the corresponding  coordinate in $R^{2}\CP^\infty$.  
This leads to the following definition.

\begin{defn}\label{def:obstruction_series}
Let $\phi\colon \MU\to A$ be a homotopy ring map into an $\EE_\infty$-ring $A$. Let $\xu$ be the universal coordinate in $\MU^*\CP^\infty$.  We define the \tdef{obstruction series} of $\phi$ to be the element 
\[
\obsser := P_\infty^A(\phi(\xu)) - \phi(P_\infty^\MU(\xu)) \in A^{2\rho}(BC_p\times \CP^\infty)  
\]
\end{defn}

\begin{rem}\label{rem_triv_rho}
Assume that $A$ comes equipped with a (possibly different) complex orientation $u:\MU\to A$ with coordinate $x$, and with respect to which the $p$-series $[p]_{\Fa}(x)$ is not a zero-divisor in $A^*[\![x]\!]$. Then via the Gysin sequence for the bundle $S^1\to BC_p\to \CP^\infty$ and the Thom isomorphism for the complex oriented bundle $2\rho$, we obtain an isomorphism 
\[
A^{2\rho}(BC_p\times \CP^\infty) \cong A^{2p}(BC_p\times \CP^\infty) \cong A_*[\![\ta,\xa]\!]/([p]_\Fa(t))[-2p],
\]
so that the class $\obsser$ can be thought of as a power series in the variables $\ta$ and $\xa$ (and the class $\JN_d$ a power series in $\ta$), well-defined modulo the $p$-series of the formal group law of $A$. Similarly, t This is our justification for the name ``obstruction series''.
\end{rem}
An immediate consequence of the above discussion is the following:
\begin{cor}\label{cor:obstruction_vanishes}
In the settings of \Cref{def:obstruction_series},
if $\phi\colon \MU \to A$ is an $\EE_n$-map, then the images of $\JN_d$ and $\obsser$ in $A^{2*\rho}(\conf_{hC_p}\times \CP^\infty)$ vanish.
\end{cor}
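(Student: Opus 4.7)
The plan is to use the compatibility of the total power operations with the operadic filtration $\conf_n \hookrightarrow \conf_\infty$, combined with the very definition of an $\EE_n$-map as a map that commutes with the $\EE_n$-power operation.

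First, I would spell out the naturality picture. For any $\EE_\infty$-ring $R$, the $\EE_\infty$ total $p$-th power map $P_\infty^R$ restricts, along the inclusion $j_n\colon \conf_n \hookrightarrow \conf_\infty$ of $C_p$-spaces, to the $\EE_n$ total $p$-th power map $P_n^R$. More precisely, the composite
\[
\Omega^\infty \Sigma^{2d}R \xrightarrow{P_\infty^R} \Omega^\infty\bigl(\Sigma^{2d\rho}R^{\conf_\infty}\bigr)^{hC_p} \xrightarrow{(j_n)^*} \Omega^\infty\bigl(\Sigma^{2d\rho}R^{\conf_n}\bigr)^{hC_p}
\]
agrees with $P_n^R$, since the $\EE_n$-structure on $R$ is obtained by restricting its $\EE_\infty$-structure along the map of operads corresponding to $j_n$. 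This same statement holds with $R$ replaced by $A$.

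Second, by the definition of an $\EE_n$-map (recorded just before \Cref{JN_obs_element}), the hypothesis that $\phi$ is an $\EE_n$-map supplies a homotopy
\[
\phi \circ P_n^\MU \simeq P_n^A \circ \phi
\]
of maps $\Omega^\infty \Sigma^{2d}\MU \to \Omega^\infty (\Sigma^{2d\rho}A^{\conf_n})^{hC_p}$. Applying this to the class $[\CP^d]\in \pi_{2d}\MU$ for the JN-obstruction and to the coordinate $\xu \in \MU^2\CP^\infty$ for the obstruction series, we conclude that the restrictions
\[
(j_n)^*\JN_d = P_n^A(\phi([\CP^d])) - \phi(P_n^\MU([\CP^d])), \qquad (j_n)^*\obsser = P_n^A(\phi(\xu)) - \phi(P_n^\MU(\xu))
\]
both vanish in $A^{2d\rho}(\conf_{n,hC_p})$ and $A^{2\rho}(\conf_{n,hC_p}\times \CP^\infty)$ respectively.

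There is no real obstacle here beyond setting up the compatibility in the first paragraph; once one accepts that the $\EE_n$-power operation is obtained from the $\EE_\infty$-one by pullback along $j_n$, the rest is a formal consequence of the $\EE_n$-naturality of $\phi$. The only subtlety worth flagging is the choice of coordinates and the use of \Cref{rem_triv_rho} to interpret the classes as power series, but this plays no role in the vanishing argument itself—the vanishing takes place before any such reinterpretation.
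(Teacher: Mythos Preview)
Your proposal is correct and matches the paper's intended argument. The paper offers no explicit proof here—it simply declares the corollary ``an immediate consequence of the above discussion''—and what you have written is precisely the unwinding of that discussion: the restriction of $P_\infty$ along $j_n$ is $P_n$, an $\EE_n$-map commutes with $P_n$, hence the restricted differences vanish.
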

The first step in making this computationally useful is the following.

\begin{prop}\label{prop_obs_vanishing_trived}
    In the setting of \Cref{rem_triv_rho}, identify $\JN_d$ and $\obsser$ with series 
    \[\JN_d(t)\in A_*[\![\ta]\!]/([p]_\Fa(t))[-2dp]\]
    \[\obsser(t)\in A_*[\![\ta,\xa]\!]/([p]_\Fa(t))[-2p].\]
    If $\phi$ is an $\E_n$ map, then these expressions vanish modulo $(p,t^{\lfloor\frac{ (n-1)(p-1)}{2}\rfloor+1})$.
\end{prop}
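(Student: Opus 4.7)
The plan is to combine the $\EE_n$-hypothesis on $\phi$ with a classical cohomological vanishing. The key observation is that the $\EE_n$ total $p$-th power operation $P_n^R$ factors out of the $\EE_\infty$ version $P_\infty^R$ via restriction along the canonical map
\[
\iota \colon \mathrm{Conf}_p(\R^n)_{hC_p} \longrightarrow \mathrm{Conf}_p(\R^\infty)_{hC_p} \simeq BC_p
\]
induced by the inclusion of configuration spaces. If $\phi$ is an $\EE_n$-map, then by definition $\phi$ intertwines $P_n^\MU$ with $P_n^A$, and hence the defining defects for $\JN_d$ and $\obsser$ (which use $P_\infty$) vanish after pulling back along $\iota \times \mathrm{id}_{\CP^\infty}$. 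Consequently both $\JN_d$ and $\obsser$ lie in the kernel of the restriction map
\[
\mathrm{res} \colon A^{*\rho}(BC_p \times \CP^\infty) \longrightarrow A^{*\rho}(\mathrm{Conf}_p(\R^n)_{hC_p} \times \CP^\infty).
\]

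Under the identification of \Cref{rem_triv_rho}, it then suffices to show that modulo $p$, this kernel is contained in the ideal $(\ta^{\lfloor(n-1)(p-1)/2\rfloor+1})$. The crucial input is the classical computation (F.~Cohen) that $H^*(\mathrm{Conf}_p(\R^n); \F_p)$ is concentrated in cohomological degrees $\leq (n-1)(p-1)$, from which one extracts that the pullback $\iota^* \colon H^*(BC_p; \F_p) \to H^*(\mathrm{Conf}_p(\R^n)_{hC_p}; \F_p)$ is injective in degrees $\leq (n-1)(p-1)$. Since in our identification $\ta \in A^2(BC_p)$ sits in cohomological degree $2$, the class $\ta^k$ (in degree $2k$) survives in the $\F_p$-cohomology of $\mathrm{Conf}_p(\R^n)_{hC_p}$ exactly when $2k \leq (n-1)(p-1)$, i.e., $k \leq \lfloor(n-1)(p-1)/2\rfloor$. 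Therefore the mod-$p$ kernel lies in $(\ta^{\lfloor(n-1)(p-1)/2\rfloor+1})$. The $\CP^\infty$-direction contributes nothing new because $A^*(\CP^\infty) = A^*[\![\xa]\!]$ is polynomial over $A^*$, so the Künneth splitting allows coefficient-by-coefficient analysis in $\xa$.

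The main obstacle is the precise cohomological input. The naive $(n-2)$-connectivity of $\mathrm{Conf}_p(\R^n)$ only gives injectivity of $\iota^*$ in degrees $< n-1$, matching the sharp bound only at $p = 2$; for odd $p$ one needs a finer analysis of the Serre spectral sequence for $\mathrm{Conf}_p(\R^n) \to \mathrm{Conf}_p(\R^n)_{hC_p} \to BC_p$, exploiting the $C_p$-module structure on $H^*(\mathrm{Conf}_p(\R^n); \F_p)$ to verify that no incoming differentials interfere with the edge homomorphism in the relevant range. Equivalently, one can invoke the Nikolaus--Scholze-style filtration of the Tate construction $A^{tC_p}$, whose associated graded pieces are indexed by cohomology of the $\mathrm{Conf}_p(\R^m)_{hC_p}$'s and in which the $\ta$-adic bound $(n-1)(p-1)/2$ appears manifestly from the shape of the filtration.
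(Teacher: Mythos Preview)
Your strategy matches the paper's: reduce to showing that the restriction along $\mathrm{Conf}_p(\R^n)_{hC_p}\to BC_p$ detects the mod-$p$ residue of $t^r$ for $r \le \lfloor (n-1)(p-1)/2 \rfloor$. However, two steps in your argument are incomplete.

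First, the deduction ``fiber cohomology concentrated in degrees $\le (n-1)(p-1)$ implies $\iota^*$ injective on $H^*(-;\F_p)$ in those degrees'' is not valid as written: a bound on the top degree of the fiber cohomology limits the \emph{length} of differentials into the bottom row of the Serre spectral sequence, but not the \emph{positions} on the bottom row they can hit, so it does not by itself control the kernel of the edge map. You flag this in your final paragraph, but neither of your suggested fixes (a finer Serre analysis exploiting the $C_p$-module structure, or a Nikolaus--Scholze-style filtration) is actually carried out. The paper sidesteps this entirely by citing an external result (their reference [BLZ, Theorem~6.1]) which already asserts that $t^r$ is nonzero in $H^*(\mathrm{Conf}_p(\R^n)_{hC_p};\F_p)$ in the required range.

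Second, you never pass from $\F_p$-cohomology to $A$-cohomology. Even granting that $t^k$ is nonzero in $H^{2k}(\mathrm{Conf}_p(\R^n)_{hC_p};\F_p)$, you still need that the restriction map on $A$-cohomology detects the mod-$p$ residue of $t^k$; your K\"unneth remark handles only the $\CP^\infty$ direction and does not address this. The paper closes this gap with an Atiyah--Hirzebruch spectral sequence argument: under the (implicit) hypothesis that $A$ is even, the spectral sequence collapses and the $\F_p$-statement transports to $A$.
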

\begin{proof}
    We must investigate the restriction map 
    \[A^{2*\rho}(BC_p\times \CP^\infty)\to A^{2*\rho}(\conf_{hC_p}\times \CP^\infty).\] 
    Using the orientation $u$ to trivialize $\rho$ this becomes the degree $2p*$ part of the restriction map \[A^{*}(BC_p\times \CP^\infty)\to A^{*}(\conf_{hC_p}\times \CP^\infty).\]  
    To finish the proof, it suffices to show that this map detects the mod $p$ residue of the elements $t^r$ for  $r< \lfloor\frac{(n-1)(p-1)}{2}\rfloor+1$.

    By \cite[Theorem 6.1]{BLZ} the restriction map on mod $p$ cohomology
    \[\F_p^{*}(BC_p\times \CP^\infty)\to \F_p^{*}(\conf_{hC_p}\times \CP^\infty)\]
    detects the elements $t^r$ for  $r< \lfloor\frac{(n-1)(p-1)}{2}\rfloor+1$. Thus, the same map on integral cohomology detects the mod $p$ residue of these elements. Since $A$ is even concentrated, the Atiyah-Hirzebruch spectral sequence for computing $A$-valued cohomology is trivial, and so the map on $A^*$-cohomology also detects the mod $p$ residue of these elements.
\end{proof}

Now that we have a computable vanishing condition on the series $\JN_d(t)$ and $\obsser(t)$, we need to compute the series themselves. When $\phi$ induces a $p$-typical formal group law (and $p\leq 13$), Johnson and Noel have done extensive calculations of $\JN_d$ \emph{modulo the reduced $p$-series} \[\l p\r_{\Fa}(t):=[p]_F(t)/t,\] which we will put to use. For general $\phi$ we will use $\obsser(t)$, and our goal will be achieved using our formula for $\Fr^{\MU}(\xu)$; under suitable assumptions, we will derive from it a formula for $\obsser(t)$, also modulo the reduced $p$-series.

\begin{prop}\label{prop_formula_for_obs_t_inverted}
In the setting of \Cref{rem_triv_rho}, write $f(x)\in A^*[\![\xa]\!]$ for the coordinate defined by $\phi(\xu)=f(x)$ and identify $\obsser$ with a series $\obsser(t)$. Then after inverting $t$, we have
\[
\obsser(t) = \left(\prod_{k=1}^{p-1}[k]_F(t)\right)\left(\Fr^A(f)\left( \xa \prod_{k=1}^{p-1}\frac{ \xa+_{\Fa }[k]_{\Fa }(\ta)}{[k]_{\Fa }(\ta)} \right)-f(\xa)\prod_{k=1}^{p-1}\frac{f(\xa+_{\Fa }[k]_{\Fa }(\ta))}{f([k]_{\Fa }(\ta))}\right) \in A^*(\!(\ta)\!)/([p]_\Fa(t))[\![\xa]\!].
\]
\end{prop}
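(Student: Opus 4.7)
The plan is to reduce $\obsser(t)$ to the Tate-level obstruction computed in \Cref{frob_commute_formula}, multiplied by the common Euler-class factor $e_u(\overline{\rho}) := \prod_{k=1}^{p-1}[k]_{\Fa}(\ta)$ (the $u$-Euler class of the reduced regular representation). The bridge between the total $p$-th power $P_\infty$ and the Frobenius $\Fr$ is the identity $P_\infty^A(\alpha) = e_u(\overline{\rho})\cdot \Fr^A(\alpha)$ valid after $t$-inversion and the $u$-Thom-trivialization of \Cref{rem_triv_rho}; this is verified by comparing the classical formula $P_\infty^A(\xa) = \prod_{k=0}^{p-1}(\xa +_{\Fa} [k]_{\Fa}(\ta))$ with \Cref{Frobcoordinate}, and an analogous identity holds for $\MU$ in place of $A$.

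Applied to the first term of $\obsser(t)$, I get $P_\infty^A(\phi(\xu)) = e_u(\overline{\rho}) \cdot \Fr^A(f(\xa))$, and since $\Fr^A$ is an $\EE_\infty$-ring map it is additive and distributes over $f$ as $\Fr^A(f(\xa)) = \Fr^A(f)(\Fr^A(\xa))$; inserting \Cref{Frobcoordinate} for $\Fr^A(\xa)$ yields $e_u(\overline{\rho})$ times the first expression inside the bracket of the proposition.

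For the second term, applying the $\MU$-analogue gives $P_\infty^\MU(\xu) \leftrightarrow e_u^\MU(\overline{\rho}) \cdot \Fr^\MU(\xu)$ in the canonical $\MU$-Thom-trivialization, so pushing forward along $\phi$ produces $\phi(e_u^\MU(\overline{\rho})) \cdot \phi^{tC_p}(\Fr^\MU(\xu)) = \prod_{k=1}^{p-1}f([k]_{\Fa}(\ta)) \cdot \phi^{tC_p}(\Fr^\MU(\xu))$, with the result naturally expressed in the $\phi$-induced Thom-trivialization on the $A$-side. Converting this to the $u$-Thom-trivialization (so that both terms live in the common ring specified in \Cref{rem_triv_rho}) multiplies by the change-of-Thom-class factor, which precisely cancels the naive $\prod_{k=1}^{p-1}f([k]_{\Fa}(\ta))$ prefactor and replaces it by $e_u(\overline{\rho})$; inserting the formula for $\phi^{tC_p}(\Fr^\MU(\xu))$ from the proof of \Cref{frob_commute_formula} then yields $e_u(\overline{\rho})$ times the second expression inside the bracket.

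Subtracting and factoring out $e_u(\overline{\rho})$ gives the proposition's formula. The main technical step is the change-of-Thom-trivialization in the third paragraph: one must track how $\phi$ sends the canonical $\MU$-Thom class of $\overline{\rho}$ to the $\phi$-Thom class $\phi_*(U_\MU)$ on $A$ (rather than to the $u$-Thom class), and verify that the resulting conversion between the $\phi$- and $u$-trivializations exactly transforms the naive factor $e_\phi(\overline{\rho}) = \prod_{k=1}^{p-1} f([k]_{\Fa}(\ta))$ into $e_u(\overline{\rho})$, enabling the clean symmetric factorization.
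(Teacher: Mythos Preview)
Your argument is correct and arrives at the same formula via the same underlying identity---namely that under the $u$-Thom trivialization, the canonical map $\mathrm{can}$ from homotopy fixed points to Tate fixed points is division by $e_u(\overline{\rho})$, so $P_\infty = e_u(\overline{\rho})\cdot \Fr$ after inverting $t$. The difference is organizational.

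You treat the two terms of $\obsser$ separately: for $\phi(P_\infty^\MU(\xu))$ you pass through the canonical $\MU$-trivialization, push forward along $\phi$ into the $\phi$-trivialization of $A$, and then convert to the $u$-trivialization. The paper avoids this detour by observing that $\mathrm{can}$ is natural for \emph{any} map of spectra (no ring structure needed on $\phi$), so that
\[
\mathrm{can}_A\bigl(P_\infty^A(\phi(\xu)) - \phi(P_\infty^\MU(\xu))\bigr)=\Fr^A(\phi(\xu))-\phi^{tC_p}(\Fr^\MU(\xu))
\]
in one stroke, and then $u$-trivializes only once on the $A$-side. This sidesteps your ``main technical step'' (the change-of-Thom-class from $\phi$- to $u$-trivialization) entirely. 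In fact, verifying that your conversion factor is $e_u(\overline{\rho})/e_\phi(\overline{\rho})$---which it is, since $\theta_\phi = (e_\phi/e_u)\cdot\theta_u$ as equivalences $\Sigma^{2\rho}A\simeq \Sigma^{2p}A$, hence $b|_u = (e_u/e_\phi)\cdot b|_\phi$---is essentially equivalent to the naturality statement the paper invokes. So what you identify as the crux of your argument is precisely what the paper's one-line naturality appeal makes automatic.
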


\begin{proof}
By \cite[IV.1.21]{NikolausScholze}, for any $\E_\infty$ ring $R$ we have a commutative diagram 
   \[ \begin{tikzcd}
       \Omega^\infty\Sigma^2R\arrow[r,"P_\infty^R"]\arrow[dr,"\Fr^R"]  & \Omega^\infty\left(\Sigma^{2\rho}R\right)^{hC_2}\arrow[d,"\mathrm{can}"]\\
   &\Sigma^2R^{tC_2}  \\
    \end{tikzcd}.\]
    The diagram is also natural along $\E_\infty$ ring maps $R\to R'$. We consider this diagram with $R=A$, and for $N>n$ we map $\CP^{N+1}$ in. As above, using the complex orientation $u$ of $A$ we make the identification
    \[A^{2\rho}(BC_p\times\CP^{N+1})\simeq A^{2p}(BC_p\times\CP^{N+1}).\] Then on $\pi_{0}$ the diagram becomes

     \[ \begin{tikzcd}
       A_*[\xa ]/(\xa ^{N})[-2]\arrow[r,"P_\infty^A"]\arrow[dr,"\Fr^A"]  & \frac{A_*[\![\ta ]\!]}{[p]_\Fu (\ta )}[\xa ]/(\xa ^{N})[-2p]\arrow[d,"\mathrm{can} "] \\
   &\frac{A_*(\!(\ta )\!)}{[p]_{\Fa }(\ta )}[\xa ]/(\xa ^{N})[-2]  \\
    \end{tikzcd}.\]
    Under these identifications the vertical map $\mathrm{can}$ is given by division by the Euler class of $\rho-1$, which is $\prod_{k=1}^{p-1}[k]_F(t)$. 
    
    For any ring map $\phi:\MU\to A$ consider the element $P_\infty^A(\phi(\xu))-\phi(P_\infty^\MU(\xu))$ in the middle ring and the element $\Fr^A(\phi(\xu))-\phi(\Fr^\MU(\xu))$ in 
    the bottom ring. By commutativity the vertical map sends the former to the latter.

     It remains to show that the right hand parenthesized factor of displayed element in the proposition statement represents 
     \[\Fr^A(\phi(\xu))-\phi(\Fr^\MU(\xu)),\] which follows from \Cref{frob_commute_formula}.

\end{proof}

By combining our formulas for the obstruction series and the computable vanishing condition of \Cref{prop_obs_vanishing_trived}, we can finally assemble our main computational tools. First we have the result for $\obsser(t)$.

\begin{prop}\label{prop_final_obsser_obs}
    In the setting of \Cref{prop_formula_for_obs_t_inverted}, suppose that $\phi:\MU\to A$ is $\E_n$ for some $n=1,...,\infty$. Then after adding some multiple of $\l p\r_F(t)$, the expression
    \[ \obsser(t)=\left(\prod_{k=1}^{p-1}[k]_F(t)\right)\left(\Fr^A(f)\left( \xa \prod_{k=1}^{p-1}\frac{ \xa+_{\Fa }[k]_{\Fa }(\ta)}{[k]_{\Fa }(\ta)} \right)-f(\xa)\prod_{k=1}^{p-1}\frac{f(\xa+_{\Fa }[k]_{\Fa }(\ta))}{f([k]_{\Fa }(\ta))}\right) \in A^*(\!(\ta)\!)/([p]_\Fa(t))[\![\xa]\!]\]
    
    lifts to $A^*[\![\ta]\!]/([p]_\Fa(t))[\![\xa]\!]$  in such a way that this lift vanishes in $A^*[\![\ta]\!]/([p]_\Fa(t),p,t^{\lfloor\frac{ (n-1)(p-1)}{2}\rfloor+1})[\![\xa]\!]$.

\end{prop}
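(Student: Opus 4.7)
The plan is to simply assemble the two preceding propositions. The key observation is that the class $\obsser$ is already intrinsically defined as an element of the integral ring $A^*[\![\ta]\!]/([p]_\Fa(t))[\![\xa]\!]$ via the Thom/Gysin identification of \Cref{rem_triv_rho}. Hence the assertion amounts to saying that the displayed formula, which a priori lives only after inverting $t$, lifts to this integral element, and that the $\E_n$-hypothesis forces the lift to vanish modulo $(p, t^{\lfloor(n-1)(p-1)/2\rfloor+1})$.

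First, I would analyze the localization map
\[
A^*[\![\ta]\!]/([p]_\Fa(t))[\![\xa]\!] \longrightarrow A^*(\!(\ta)\!)/([p]_\Fa(t))[\![\xa]\!]
\]
and identify its kernel. Since $[p]_\Fa(t) = t \cdot \l p\r_F(t)$, the element $\l p\r_F(t)$ becomes $t$-torsion in the integral quotient; in fact the $t$-power torsion there is precisely the ideal generated by $\l p\r_F(t)$. This accounts for the ``adding some multiple of $\l p\r_F(t)$'' clause in the statement: a lift from the localized ring to the integral ring is unique only up to adding multiples of $\l p\r_F(t)$.

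Next, I would invoke \Cref{prop_formula_for_obs_t_inverted}: its content is precisely that the displayed formula coincides with the image of the integrally defined $\obsser(t)$ under the localization map. So $\obsser(t)$ itself is a canonical integral lift of the formula. Finally, I would apply \Cref{prop_obs_vanishing_trived}: under the $\E_n$-hypothesis, the integral class $\obsser(t)$ vanishes in the further quotient modulo $(p, t^{\lfloor(n-1)(p-1)/2\rfloor+1})$, which is exactly the required vanishing for our lift. Thus $\obsser(t)$ itself furnishes the desired lift.

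The main subtlety, such as it is, is just bookkeeping: distinguishing the integrally-defined $\obsser(t)$, which a priori has no closed-form expression, from its image under the localization map, for which \Cref{prop_formula_for_obs_t_inverted} supplies the explicit formula, and reconciling the ``lift up to a multiple of $\l p\r_F(t)$'' language of the statement with the fact that we already have a canonical integral representative on hand. No further computation is required; the proposition is really just the synthesis of its two predecessors.
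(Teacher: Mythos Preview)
Your proposal is correct and matches the paper's proof essentially verbatim: identify the kernel of the localization map as the ideal generated by $\langle p\rangle_F(t)$, use \Cref{prop_formula_for_obs_t_inverted} to recognize the displayed expression as the image of the integrally defined $\obsser(t)$, and then invoke \Cref{prop_obs_vanishing_trived} for the vanishing. Your added justification for why the kernel is $(\langle p\rangle_F(t))$ is a nice touch the paper leaves implicit.
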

\begin{rem}
    Phrased more computationally: upon long division of this expressions by $\l p\r_F(t)$, the remainder is $O(t^{\lfloor\frac{ (n-1)(p-1)}{2}\rfloor+1})$.
\end{rem}
\begin{proof}

    Consider the map \[A^*[\![\ta]\!]/([p]_\Fa(t))[\![\xa]\!]\to A^*(\!(\ta)\!)/([p]_\Fa(t))[\![\xa]\!].\]
    The kernel is the ideal generated by $\l  p\r_F(t)$.  By \Cref{prop_formula_for_obs_t_inverted}, the displayed expression represents the image of $\obsser(t)$ under this map. Thus, after adding some multiple of $\l p\r_F(t)$, it lifts along it. Then since $\phi$ is $\E_n$ we can apply \Cref{prop_obs_vanishing_trived} to get the desired vanishing modulo $(p,t^{\lfloor\frac{ (n-1)(p-1)}{2}\rfloor+1})$. 
\end{proof}
As Johnson-Noel have already made extensive calculations of $\JN_d(t)$, to use their formulas we must simply translate them into our notation.
\begin{lem}\label{translate_MCns}
    Let $p\leq 13$. Suppose that $\phi$ is the Quillen idempotent $\MU\to\MU_{(p)}$ and write $v_i\in(\MU_{(p)})_{2(p^i-1)}$ for the Hazewinkel generators. Consider the elements which Johnson-Noel denote by $MC_n(\xi)$  (cf. \cite{JohnsonNoel} 5.19 and 6.3-6.8). Substitute $\xi\to \tu$ and choose any $d\neq p^j-1$. Then up to multiples of $\l p \r_F(\tu)$ we have
    \[\JN_d(\tu)=-MC_d(\tu)\Big(((p-1)!)^{-2d}\tu^{2d(1-p)}+O(\tu^{2d(1-p)+1})\Big)\in \MU_*(\!(\tu)\!)/([p]_F(\tu)).\]
\end{lem}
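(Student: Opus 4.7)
} My strategy is to match our obstruction $\JN_d(\tu)$ with Johnson–Noel's $MC_d(\xi)$ directly, tracking only the normalization difference. Both quantities measure the same underlying failure: the Frobenius discrepancy $P_\infty \circ \phi - \phi \circ P_\infty$ applied to the class $[\CP^d] \in \MU_{2d}$, where $\phi$ is the Quillen idempotent. Thus I expect the identification to be essentially a dictionary between their conventions and ours.

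The first step is to unpack Johnson–Noel's definition of $MC_n(\xi)$ in their 5.19. Their $MC_n(\xi)$ is the Frobenius obstruction on $[\CP^n]$, recorded in a specific coordinate $\xi$ on $\MU^{tC_p}$ (which, upon substitution $\xi \mapsto \tu$, becomes our coordinate) and reduced modulo $\langle p\rangle_F$. Our $\JN_d(\tu)$ arises by identical considerations (cf.\ \Cref{JN_obs_element}), so the two power series will necessarily agree up to the conventional normalization used to identify $A^{2d\rho}(BC_p \times \CP^\infty)$ with a Laurent series ring.

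The second step is to compute the conversion factor. Exactly as in the proof of \Cref{prop_formula_for_obs_t_inverted}, identifying $\JN_d$ with a Laurent series in $\tu$ requires trivializing $2d\rho$ via the complex orientation $u$, which introduces the Euler class $\prod_{k=1}^{p-1}[k]_F(\tu)$ with leading term $(p-1)!\,\tu^{p-1}$. Johnson–Noel's $MC_d(\xi)$ is normalized by applying this trivialization on both sides of the Frobenius difference (rather than only on the $P_\infty^A \circ \phi$ side, as our formulation naturally does), which gives an extra factor of the same Euler class to a total power of $2d$. Inverting that factor in $\MU_*(\!(\tu)\!)/([p]_F(\tu))$ yields leading term $((p-1)!)^{-2d}\tu^{2d(1-p)}$, matching the stated expression. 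The overall sign $-1$ comes from matching our convention $P\circ \phi - \phi \circ P$ against the opposite convention used by Johnson–Noel.

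The main obstacle is the bookkeeping in the second step: verifying that the Euler class appears to exactly the $2d$-th power (rather than to the $d$-th or some other power of $d$), and that the leading scalar and sign are correct. This amounts to comparing our formula for $\obsser$ from \Cref{prop_formula_for_obs_t_inverted}, specialized to $d$-th powers of coordinates (since $[\CP^d]$ is extracted from $\xu^{d+1}$ via the Mishchenko logarithm), against the explicit tables of $MC_n(\xi)$ recorded in Johnson–Noel's 6.3--6.8 for $p \leq 13$. Checking the leading terms in the first few nontrivial cases should pin down the normalization, after which the general formula is forced by degree considerations (since all the relevant $\tu$-adic valuations are determined by Thom-isomorphism degree shifts).
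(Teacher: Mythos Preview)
Your overall picture—that $\JN_d$ and $MC_d$ differ only by a normalization factor coming from the Euler class of $\rho - 1$—is correct, but two of your specific claims are wrong, and you are missing the one-line observation that makes the proof immediate.

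First, the sign. You attribute the $-1$ to a convention mismatch between $P\circ\phi - \phi\circ P$ and its negative. That is not the source. The hypothesis $d\neq p^j-1$ is there precisely so that $\phi([\CP^d])=0$: the Quillen idempotent kills the non-$p$-typical Mishchenko generators. Hence the first term $P_\infty^{\MU_{(p)}}(\phi([\CP^d]))$ of $\JN_d$ vanishes outright, leaving $\JN_d(\tu)=-\phi(P_\infty^\MU([\CP^d]))$. You never invoke the condition $d\neq p^j-1$ anywhere in your plan, which is the tell that you have not found this step.

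Second, the definition of $MC_d$. It is not the full Frobenius difference in an alternate normalization; by Johnson--Noel's 5.19 it is literally $\phi\bigl(\chi^{2d}\,P_\infty^\MU([\CP^d])\bigr)$, where $\chi=\prod_{k=1}^{p-1}[k]_F(\tu)$. Once you know $\JN_d=-\phi(P_\infty^\MU([\CP^d]))$, you just multiply and divide by $\chi^{2d}$ inside $\phi$ (using that $\phi$ is a ring map) to get $\JN_d=-MC_d(\tu)\cdot\phi(\chi)^{-2d}$, and then expand $\phi(\chi)^{-2d}$ from $\chi=(p-1)!\,\tu^{p-1}+O(\tu^p)$. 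No table-checking from 6.3--6.8, no specialization of $\obsser$ to coordinate powers, and no case analysis is needed; the $2d$ is forced by the definition, not by numerics.
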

\begin{proof}
    Since $d\neq p^j-1$ and $\phi$ (which Johnson-Noel call $r$) is $p$-typical we have $\phi([\CP^d])=0$. Thus
    \[\JN_d(\tu)=P_\infty^{\MU_{(p)}}(\phi([\CP^d])) - \phi(P_\infty^\MU([\CP^d]))= - \phi(P_\infty^\MU([\CP^d])).\]
    The element in \cite{JohnsonNoel} denoted by $\chi$ is the euler class of $\rho-1$, which is 
    \[\chi:=\prod_{k=1}^{p-1}[k]_F(\tu)=(p-1)!\tu^{p-1}+O(\tu^p).\]
    For the rest of the proof we work modulo $\l p \r_F(\tu)$. Then by definition (\cite[5.19]{JohnsonNoel})  we have $MC_d(\tu)=\phi(\chi^{2d}P_\infty^\MU([\CP^d]))$ and  we get
    \begin{align*}
        \JN_d(\tu)&=- \phi(P_\infty^\MU([\CP^d]))\\
        &=- \phi(\chi^{2d}P_\infty^\MU([\CP^d])\chi^{-2d})\\
        &=- \phi(\chi^{2d}P_\infty^\MU([\CP^d]))\cdot\phi(\chi)^{-2d}\\
        &=- MC_d(\tu)\phi(\chi)^{-2d}
    \end{align*}
    
    and so the result follows from the displayed $\tu$-expansion of $\chi$.
\end{proof}

\begin{prop}\label{final_jn_obs}
    In the setting of \Cref{translate_MCns}, suppose that $\phi:\MU\to A$ is $\E_n$ and induces a $p$-typical formal group law. Then, after adding some multiple of $\l p \r_F(t)$, the element
    \[\JN_d(t)=-\phi(MC_d)(t)\Big((p-1)!^{-2d}t^{2d(1-p)}+O(t^{2d(1-p)+1})\Big)\in A_*(\!(t)\!)/([p]_F(t)).\]
    lifts to $A^*[\![\ta]\!]/([p]_\Fa(t))$ in such a way that it vanishes in $A^*[\![\ta]\!]/([p]_\Fa(t),p,t^{\lfloor\frac{ (n-1)(p-1)}{2}\rfloor+1})$.
\end{prop}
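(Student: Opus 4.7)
The plan is to deduce the displayed formula for $\JN_d(t)$ by transporting Johnson--Noel's computation for the Quillen idempotent (recorded in \Cref{translate_MCns}) to our $p$-typical $\phi$, and then to apply \Cref{prop_obs_vanishing_trived} to extract the vanishing statement from the hypothesis that $\phi$ is $\E_n$. The core of the argument is formal; care is only needed when passing between the $t$-inverted and non-inverted quotient rings.

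For the first step, I would exploit the fact that a $p$-typical orientation factors through the Quillen idempotent: as homotopy ring maps we may write $\phi = \psi \circ q$, where $q \colon \MU \to \MU_{(p)}$ is the Quillen idempotent and $\psi \colon \MU_{(p)} \to A$. For $d \neq p^j - 1$ both $\phi([\CP^d])$ and $q([\CP^d])$ vanish, so by the definition of the JN-obstruction series,
\[
\JN_d^\phi = -\phi(P_\infty^\MU([\CP^d])) = \psi\bigl(-q(P_\infty^\MU([\CP^d]))\bigr) = \psi(\JN_d^q).
\]
Applying $\psi$ coefficient-wise in the sense of \Cref{pushforward_series} to the formula of \Cref{translate_MCns}, and noting that $\psi(MC_d) = \phi(MC_d)$, then produces the displayed expression for $\JN_d^\phi$ in $A_*(\!(t)\!)/([p]_F(t))$, valid modulo $\l p \r_F(t)$.

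For the second step, I would observe that $\l p \r_F(t)$ lies in the kernel of the localization map $A_*[\![t]\!]/([p]_F(t)) \to A_*(\!(t)\!)/([p]_F(t))$, since $t \cdot \l p \r_F(t) = [p]_F(t)$ vanishes in the quotient. The element $\JN_d^\phi$ itself naturally lives in $A^{2d\rho}(BC_p \times \CP^\infty) \simeq A_*[\![t]\!]/([p]_F(t))[-2dp]$, which carries no negative powers of $t$, and so furnishes a canonical lift of its own image in the localized quotient. Adjusting the displayed expression by a suitable multiple of $\l p \r_F(t)$ gives such a lift, and \Cref{prop_obs_vanishing_trived} applied with the $\E_n$-hypothesis on $\phi$ then yields the vanishing modulo $(p, t^{\lfloor (n-1)(p-1)/2\rfloor + 1})$. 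The principal obstacle is bookkeeping rather than conceptual: carefully tracking the identifications of $\JN_d^\phi$ across the various quotient and localization maps, and verifying that the naturality identity $\JN_d^\phi = \psi(\JN_d^q)$ is compatible with the trivialization of $\rho$ used in \Cref{rem_triv_rho}.
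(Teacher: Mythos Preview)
Your proposal is correct and follows essentially the same approach as the paper. The paper's proof is terse---it simply says to push forward the Johnson--Noel computation of \Cref{translate_MCns} along $\phi$ (noting $\phi(MC_d(\tu)) = \phi(MC_d)(t)$) and then refers back to the argument of \Cref{prop_final_obsser_obs} for the vanishing step---while you make the factorization $\phi = \psi \circ q$ through the Quillen idempotent explicit and spell out the kernel-of-localization argument in full; but the content is the same.
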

\begin{proof}
    Since $\phi$ is $p$-typical, we may take the Johnson-Noel calculations of $\JN_d$ (\Cref{translate_MCns}) and push them forward along $\phi$. Note that $\phi(MC_d(\tu))=\phi(MC_d)(t)$ (cf. \Cref{pushforward_series}). This establishes the displayed equality. The proof of the vanishing condition is identical to that in \Cref{prop_final_obsser_obs}.
\end{proof}

We can now state two application of this obstruction theory.

\begin{thm}\label{thm_no_p2_E5}
    Suppose $R$ is an $\E_\infty$ ring with nonzero $T(1)$-localization. Then there does not exist a complex orientation of $R$ which is both $\E_5$ and $2$-typical.
\end{thm}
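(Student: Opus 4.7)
The plan is to apply the JN-obstruction of \Cref{final_jn_obs} at $p = 2$ and $n = 5$ to the hypothetical $2$-typical orientation $\phi \colon \MU \to R$ that is representable by an $\EE_5$-map, and to produce a nonvanishing obstruction directly from Johnson-Noel's explicit tables at a small value of $d$. Suppose for contradiction such a $\phi$ exists and $R_{T(1)} \ne 0$. For each $d$ with $d \notin \{2^j - 1\}_{j \ge 0}$, \Cref{final_jn_obs} forces $\JN_d(t)$ to lift (via some multiple of $\langle 2 \rangle_F(t)$) to an element of $\pi_*R[\![t]\!]/([2]_F(t))$ vanishing modulo $(2, t^3)$, since $\lfloor (n-1)(p-1)/2 \rfloor + 1 = 3$. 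In Hazewinkel coordinates the $2$-typicality gives $\langle 2 \rangle_F(t) \equiv v_1 t \pmod{(2, t^3)}$; consequently, among all positive-power lifts of $\JN_d(t)$, the $t^0$-coefficient is well-defined in $\pi_*R/2$ (the ambiguity only affects the coefficients of $t$ and $t^2$, each by a $v_1$-multiple), and the $\EE_5$ hypothesis forces it to vanish.

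The next step is to take $d = 2$ (the smallest value with $d \ne 2^j - 1$) and substitute Johnson-Noel's formula for $MC_2(t)$ at $p = 2$, from \cite[\S\S 5--6]{JohnsonNoel}, into
\[
\JN_2(t) \;=\; -\phi(MC_2)(t)\bigl(t^{-4} + O(t^{-3})\bigr) \in \pi_*R(\!(t)\!)/([2]_F(t)),
\]
and clear the negative powers of $t$ using the relation $\langle 2 \rangle_F(t) \equiv 0 \pmod{[2]_F(t)}$. Reducing the resulting positive-power representative modulo $(2, t^3)$ yields a polynomial of $t$-degree $<3$ over $\pi_*R/2$ whose coefficients are concrete expressions in the Hazewinkel generators $v_i$ coming from $\phi$. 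The heart of the argument is to compute the $t^0$-coefficient of this polynomial and verify that it is a nontrivial element of $\pi_*R/2$ which is a unit, or a nonzero power of $v_1$ times a unit, after inverting $v_1$.

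Finally, to obtain the contradiction: since $R_{T(1)} \ne 0$, the $T(1)$-localization $\pi_*R_{T(1)}/2$ is nonzero and $v_1$ is invertible in it, so a nonzero $v_1$-monomial in $\pi_*R/2$ remains nonzero in $\pi_*R_{T(1)}/2$, contradicting the required vanishing. The main obstacle is the explicit Johnson-Noel computation: one must carefully propagate the substitution through the Laurent-series clearing and confirm that the $t^0$-coefficient does not incidentally vanish or become $v_1$-divisible after the clearing. If $d = 2$ should fall short, the fallback is to repeat the computation at $d = 4, 5, 6, \ldots$, where \cite{JohnsonNoel} supplies $MC_d(t)$ for $p = 2$; by the same mechanism the obstruction should become visible for some small $d$ of this form, explaining the improvement over Lawson-Senger's $\EE_7$ bound which uses only the $t^{<4}$ vanishing.
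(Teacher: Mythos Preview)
Your framework—apply \Cref{final_jn_obs} at $p=2$, $n=5$ (threshold $t^3$) together with Johnson--Noel's tables—matches the paper's, but the execution has a genuine gap and misses two ingredients the paper actually uses.

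The main gap is your focus on the $t^0$-coefficient. After clearing to a positive-power representative, the Johnson--Noel data give
\[
\JN_2(t)\equiv t^{2}\bigl(\phi(v_1)^6+\phi(v_2)^2\bigr)+O(t^3),\qquad
\JN_4(t)\equiv t^{2}\,\phi(v_1)^4\phi(v_2)^2+O(t^3)
\]
modulo $2$; both begin at $t^2$, so the $t^0$-coefficient is zero and your proposed obstruction is vacuous. This is not an accident of $d=2$, so your fallback to larger $d$ does not rescue the argument. The paper's obstruction lives entirely in the $t^2$-coefficient—precisely the coefficient you discarded as ill-defined. (You are right that the $\langle 2\rangle_F$-ambiguity can shift the $t^1$- and $t^2$-coefficients; the paper's footnote that Johnson--Noel ``have already performed the long division'' is what pins down the specific lift whose $t^2$-coefficient is meaningful.)

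Two further ingredients are absent from your plan. First, a single value of $d$ is not enough: from $\JN_2$ one obtains only $\phi(v_1)^6=\phi(v_2)^2$, which is not a contradiction; the paper combines this with the $\JN_4$ relation $\phi(v_1)^4\phi(v_2)^2=0$ to deduce $\phi(v_1)^{10}=0$. Second, the paper's very first move is the chromatic Nullstellensatz, reducing to $R=E_1(\kappa)$ for an algebraically closed field $\kappa$. This is what guarantees that $\phi(v_1)$ is a unit in $\pi_*R/2$ (any complex orientation of $E_1(\kappa)$ has height exactly $1$), turning $\phi(v_1)^{10}=0$ into an honest contradiction. Your substitute—``$v_1$ is invertible in $\pi_*L_{T(1)}R/2$''—conflates the orientation-dependent class $\phi(v_1)$ with a periodicity element of the localization; without something like the Nullstellensatz step it is not clear how to conclude that $\phi(v_1)$ itself is invertible.
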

\begin{proof} 
 By the chromatic Nullstellensatz of \cite{nullstel}, $R$ admits an $\E_\infty$ map to a height 1 Morava $E$-theory $E_1(\kappa)$ of an algebraically closed field $\kappa$ of characteristic $2$. Thus it suffices to assume $R=E_1(\kappa)$. 


 Now suppose we have a complex orientation $\phi:\MU\to E_1(\kappa)$ which is $2$-typical. Then we may use \Cref{translate_MCns} and the formulas for $MC_2(\xi)$ and $MC_4(\xi)$ in \cite[6.3]{JohnsonNoel}, we find that, up to a multiple\footnote{Of course, Johnson-Noel have already performed the long division with $\l 2\r_F(t)$, so this is somewhat redundant. } of $\l 2\r_F(t)$ we have
 \[\JN_2(t)=\left(t^6(\phi(v_1)^6+\phi(v_2)^2)+O(t^7)\right)
 (t^{-4})=t^{2}(\phi(v_1)^6+\phi(v_2)^2)+O(t^3)\]
 \[\JN_4(t)=\left(t^{10}\phi(v_1)^4\phi(v_2)^2+O(t^{11})\right)
 (t^{-8})=t^{2}\phi(v_1)^4\phi(v_2)^2+O(t^3).\]
 By \Cref{final_jn_obs}, these elements must vanish modulo $(2, t^2)$. But then $\phi(v_1)^6=\phi(v_2)^2$ from $\JN_2(t)$, so $\phi(v_1)^{10}=0$ from $\JN_4(t)$. But $\phi$ induces a height 1 formal group law over $\pi_*E_1(\kappa)$, which by definition means that $\phi(v_1)$ is invertible in $\pi_*E_1(\kappa)/2$.

\end{proof}

\begin{rem}\label{rem_JN_diff1}
    We obtain analogous results for $p=3,5,7,11,13$. For $p$ in this range, we use the formula of \cite{JohnsonNoel} for $MC_{2(p-1)}(\xi)$ and the exact same proof as above, to find that for an $\E_\infty$ ring $R$ with $L_{T(1)}R\neq 0$ no orientation of $R$ is simultaneously $p$-typical and $\E_{2p+3}$.
\end{rem}

\begin{rem}
    We obtain as an immediate corollary that the Quillen idempotent at $p=2$ is not $\E_5$ (and not $E_{2p+3}$ at $p=3,...,13$). The best previously known bound is due to Senger, who showed it is not $\E_{7}$ (and matches his bound of $\E_{2p+3}$ at $p=3,...,13$) (\cite[Theorem 1.3]{Senger}).
\end{rem}

Now we showcase that the obstruction theory using $\obsser(t)$ can do a bit better.

\begin{thm}\label{QI_p3}
    The Quillen idempotent at $p=3$ is not $\E_5$.
\end{thm}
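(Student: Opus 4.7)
The strategy is to apply the two-variable obstruction series $\obsser(t,x)$ from \Cref{prop_final_obsser_obs} to the Quillen idempotent $\phi\colon \MU\to\MU_{(3)}$, taking $A=\MU_{(3)}$, $p=3$, and $n=5$. The numerical threshold $\lfloor(n-1)(p-1)/2\rfloor + 1 = 5$ means that an $\E_5$-refinement of $\phi$ would force $\obsser(t,x)$ to vanish modulo $(3,t^5)$ after lifting from $A^*(\!(t)\!)/([3]_F(t))[\![x]\!]$ to $A^*[\![t]\!]/([3]_F(t))[\![x]\!]$ by adding a suitable multiple of $\l 3\r_F(t)$. To obtain a contradiction, I will compute $\obsser(t,x)$ explicitly enough to exhibit a coefficient of some monomial $x^a t^b$ with $b<5$ that is visibly nonzero modulo $3$ in $(\MU_{(3)})_*$.

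The key improvement over the Johnson--Noel method of \Cref{thm_no_p2_E5} and \Cref{rem_JN_diff1} (which matches Senger's bound of $\E_9$ at $p=3$) is that $\obsser(t,x)$ carries an extra variable. Obstructions that appear in the one-variable series $\JN_d(t)$ only at high powers of $t$ (because $\JN_d$ detects only the bordism classes $[\CP^d]$) reappear in $\obsser(t,x)$ at lower powers of $t$ when decorated with appropriate powers of $x$. Concretely, I will expand
\[
\obsser(t,x) = \bigg(\prod_{k=1}^{2}[k]_F(t)\bigg)\bigg(\Fr^A(f)\Big(x\prod_{k=1}^{2}\tfrac{x+_F[k]_F(t)}{[k]_F(t)}\Big) - f(x)\prod_{k=1}^{2}\tfrac{f(x+_F[k]_F(t))}{f([k]_F(t))}\bigg),
\]
with $f(x) = \phi(\xu) = x + O(x^2)$. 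At $p=3$ the simplifications $[1]_F(t)=t$, $[2]_F(t)=2t+O(t^2)$, and $[3]_F(t)\equiv v_1 t^3\pmod{(3,t^4)}$ are available, and the prefactor $\prod_k[k]_F(t)$ exactly clears the $t^{-2}$-poles in the bracketed expression, yielding an honest element of $A^*[\![t]\!]/([3]_F(t))[\![x]\!]$ after truncating modulo $\l 3 \r_F(t)$.

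The main obstacle is computational discipline: the two bracketed terms must agree through any order of $t$ and $x$ that is detected by a bare $\EE_n$-structure, so their difference is a delicate cancellation, and one must retain enough terms of the expansions of $\Fr^A(f)$, the formal group addition $+_F$, and the $3$-series to see a nonvanishing residue. I expect that a suitable choice of monomial $x^a t^b$ with $b\le 4$ produces a polynomial in the Hazewinkel generators $v_1, v_2$ whose leading term does not vanish modulo $(3,\l 3\r_F(t))$. Since such classes are non-zero-divisors in $(\MU_{(3)})_*/3$, this contradicts the vanishing forced by the hypothetical $\E_5$-structure on $\phi$ and thereby completes the proof.
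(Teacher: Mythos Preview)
Your framework is the paper's own---use the two-variable obstruction series $\obsser$ of \Cref{prop_final_obsser_obs} at $p=3$, $n=5$, and exhibit a coefficient that survives modulo $(3,t^5)$---but two substantive pieces are missing.

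First, you propose to work with $A=\MU_{(3)}$ directly. The formula requires $\Fr^A(f)$, i.e.\ the Frobenius of $A$ applied to the \emph{coefficients} of the change-of-coordinate $f$. For $A=\MU_{(3)}$ those coefficients live in $\pi_*\MU_{(3)}$, and evaluating $\Fr^{\MU}$ on them is precisely the computation of power operations on bordism classes---the same hard input underlying the Johnson--Noel tables. You have not indicated how you would compute this, and without it the expression in \Cref{prop_final_obsser_obs} cannot be expanded. The paper sidesteps this by composing with the $\E_\infty$ Todd orientation $\MU_{(3)}\to \ku_{(3)}$ and taking $A=\ku_{(3)}$: there $\pi_*A=\Z_{(3)}[\beta]$ and $\Fr(\beta^k)=3^k\beta^k$, so $\Fr^A(f)$ is explicit. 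Since the Todd map is $\E_\infty$, a nonzero obstruction downstairs forces one upstairs; working in $\ku_{(3)}$ loses nothing and makes the computation finite.

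Second, even granting a way to expand, you have not carried out the computation---you only ``expect'' a nonvanishing coefficient. The paper (with computer assistance, Appendix) expands the $x^9$-coefficient, performs long division by $\langle 3\rangle_{\Td}(t)=3-3\beta t+\beta^2 t^2$, and finds the remainder modulo $3$ is $\beta^{10}t^4\neq 0$. Note also that your remark about the prefactor ``exactly clearing the $t^{-2}$-poles'' is not accurate at higher $x$-degree: the $x^9$-coefficient in the paper's expansion begins at $t^{-3}$, so the long-division step against $\langle 3\rangle_F(t)$ is genuinely needed to produce the lift, not merely a truncation.
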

\begin{proof}
     Consider the (3-localized) Todd orientation $\Td:\MU_{}\to\ku_{(3)}$. This is an $\E_\infty$ orientation (cf. e.g. \cite{Joachim}, \cite[Theorem 10.3]{ahr}). Thus, it suffice to show that composite $\MU\to\MU_{(3)}\oto{\Td}\ku_{(3)}$ classifying the $3$-typification of the Todd orientation is not $\E_5$. Call this composite $\phi$.
     
    We have $(\ku_{(3)})_*\simeq \Z_{(3)}[\beta]$ with $|\beta|=2$, and the formal group law associated to the Todd orientation is given by 
    \[\xa+_\Td \ya=\xa+\ya-\beta \xa\ya,\]
    the 3-series is given by
    \[[3]_\Td(\ta)=3\ta-3\beta \ta^2+\beta^2 \ta^3,\]
    and the logarithm is
    \[\mathrm{log}_\Td(\xa)=\xa + \beta\frac{\xa^2}{2}+\beta^2\frac{\xa^3}{3}+\beta^3\frac{\xa^4}{4}+\beta^4\frac{\xa^5}{5}+...\]
    The logarithm of the 3-typification is 
    \[\log_\phi(x)=x+\beta \frac{x^3}{3}+\beta^3 \frac{x^9}{9} + ...\]
    Written in terms of the Todd coordinate, $f(x)=\phi(\xu)$ becomes
    \[f(x):=\exp_\phi\circ \log_\Td (x) = x + \frac{\beta}{2} x^2 - \frac{\beta^3}{4} x^4 - 
 \frac{\beta^4}{20} x^5+....\]
    Finally, we have $\Fr(\beta^k)=3^k\beta^k$ by \cite[IV.1.12]{NikolausScholze}. Using these facts, we can explicitly expand the expression in \Cref{prop_final_obsser_obs}. If $\phi$ is an $\E_5$ map, then upon long division of that expression by $\l 3\r _\Td(t)$, the remainder must be $O(t^5)$. With computer assistance (\Cref{appendix}) we apply this to the coefficient of $x^9$. After expanding, the coefficient of $x^9$ is 
    \[ - \frac{1215 \beta^3}{8 t^3} + \frac{15957 \beta^4}{32 t^2} + 
  \frac{1041093 \beta^5}{64 t} -\frac{15676713 \beta^6}{448}+...\]
  \[...+\frac{8826507 \beta^7 t}{2240} + \frac{304379169 \beta^8 t^2}{
  17920} - \frac{334416123 \beta^9 t^3}{35840} - \frac{37570767 \beta^{10} t^4}{6400} \]
    and find that after long division by $3-3\beta \ta+\beta^2 \ta^2$ the remainder modulo 3 is $\beta^{10}t^4$.
\end{proof}

\begin{rem}
Recall that $\KU^{tC_p}\cong \QQ_p(\zeta_p)[\beta^{\pm 1}]$, under which the class $t$ maps to $\zeta_p -1$. Accordingly,  the above computation can be interpreted as a computation in the power series ring $\QQ_p(\zeta_p)[\![x]\!]$, where the condition of being $0$ mod $t^4$ translates to the $p$-adic valuation of the resulting expression being $\ge 2$. 
\end{rem}


\begin{rem}\label{compare_JN}
    The obstruction theory with $\obsser$ has the following benefits over the one using $\JN$ and the Johnson-Noel calculations: first, it does not require $\phi$ to be $p$-typical. Second, because of the formula
\[P_\infty^\MU(\xu)=\prod_{k=1}^{p-1}[k]_\Fu(\tu)\cdot \Fr(\xu)=\xu\prod_{k=1}^{p-1}\xu+_\Fu [k]_\Fu(\tu),\]
which pushes forward nicely under arbitrary complex orientations, we are able to compose the Quillen idempotent with a map $\MU_{(p)}\to \ku_{(p)}$ and thus make all our computations in the ground ring $(\ku_{(p)})_*\simeq \Z_{(p)}[\beta]$. In contrast, there is no such formula for $P_\infty^\MU[\CP^d]$, and so Johnson-Noel are left with a problem of much higher computational complexity since the ground ring is $(\MU_{(p)})_*\simeq \Z_{(p)}[b_1, b_2,...]$. 
We expect that one can find obstructions in a larger range of primes because of this. On the other hand, in the range where Johnson-Noel do find obstructions, their formulas are much stronger than a mere obstruction to the Quillen idempotent, as they occur in a universal ring.
\end{rem}

We end this section by outlining a streamlined procedure to calculate Johnson-Noel's elements $P_\infty^\MU([\CP^d])$ using the Frobenius formula of \Cref{Frobcoordinate}.

 Consider the logarithm of the formal group law corresponding to the orientation $\Fr:\MU \to \MU^{tC_p}$. On the one hand, as mentioned at the beginning of this section, the logarithm pushes forward under ring maps:
\[\log_\Fr(\xu)=\Fr_*(\log_u(\xu)):=\xu + \Fr(m_1)\frac{\xu^2}{2}+\Fr(m_2)\frac{\xu^3}{3}+\Fr(m_3)\frac{\xu^4}{4}+
\Fr(m_4)\frac{\xu^5}{5}+...\]
On the other hand, we know that the power series 
\[f_\Fr(\xu)=\xu\prod_{k=1}^{p-1}\frac{\xu+_\Fu [k]_\Fu (\tu )}{[k]_\Fu (\tu )}\]
is the coordinate transformation relating the Frobenius orientation and the unit orientation, so
\[\log_\Fr(\xu)=\log_u\circ f_\Fr^{-1}(\xu).\]
Since $m_i=[\CP^i]$, the elements  $P_\infty^\MU([\CP^d])=\left(\prod_{k=1}^{p-1}[k]_\Fu(\tu)\right)^d\cdot\Fr([\CP^d])$ can be extracted from the coefficients of this composition of power series.

\section{Miscellaneous  applications}\label{section_further_app}
In this section we present some miscellaneous applications of \Cref{sharp_construction}.

\subsection{Height restrictions on $\MO\l n \r$-oriented rings}
Using the real sharp construction (cf. especially \Cref{remark_generalized_sharp}) we get for each $n$ a ring map
\[\MO\l n\r\to\MO\l n+1\r^{tC_2}.\]
thereby obtaining a new proof of a result of Hovey-Ravenel  about the chromatic support of $\MO\l n\r$.
\begin{thm}\label{thm_height_MOn} (\cite[Corollary 5.4, (2)]{ravhovey})
    Let $R$ be an $\MO\l n\r$-oriented $\E_\infty$ ring. Let $\phi(n)$ be the number of nonzero homotopy groups of $\BO$ in the range $[1,n-1]$. Then, at $p=2$, $R$ is $T(\phi(n))$-acylcic.
\end{thm}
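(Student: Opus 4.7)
The plan is to prove the stronger statement that $\MO\l n\r$ itself is $T(\phi(n))$-acyclic at $p=2$: this suffices, since any $\MO\l n\r$-oriented $\EE_\infty$ ring $R$ is then an $\MO\l n\r$-module, so $L_{T(\phi(n))} R$ becomes a module over $L_{T(\phi(n))}\MO\l n\r \simeq 0$. I would proceed by induction on $n$, with base case $n\le 1$ handled by Thom's classical splitting $\MO_{(2)} \simeq \bigvee_i \Sigma^{k_i}H\F_2$, which is $T(k)$-acyclic for every $k\ge 0$.

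For the inductive step, observe that if $\pi_n\BO = 0$ then $\MO\l n+1\r = \MO\l n\r$ and $\phi(n+1)=\phi(n)$, so there is nothing to show; when $\pi_n\BO \ne 0$ we have $\phi(n+1) = \phi(n)+1$, and the plan is to use the real sharp construction of \Cref{remark_generalized_sharp} to build the $\EE_\infty$ ring map $\MO\l n\r \to \MO\l n+1\r^{tC_2}$. This exhibits $\MO\l n+1\r^{tC_2}$ as an $\MO\l n\r$-algebra, so by the inductive hypothesis one has $L_{T(\phi(n))}(\MO\l n+1\r^{tC_2}) \simeq 0$. To close the induction, I would invoke a ``Tate chromatic shift'' principle---for a sufficiently nice connective $\EE_\infty$ ring $S$, $T(k)$-acyclicity of $S^{tC_2}$ at $p=2$ forces $T(k+1)$-acyclicity of $S$---which is a telescopic analogue of the Greenlees--Sadofsky/Kuhn Tate vanishing for Morava $K$-theory.

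The main obstacle is justifying this last chromatic shift. The naive principle fails in general: for instance $S = KU_{(2)}$ satisfies $L_{T(0)}(KU_{(2)})^{tC_2} \simeq 0$ (by rational Tate vanishing, since $(H\Q)^{tC_2} \simeq 0$) yet $L_{T(1)} KU_{(2)} \ne 0$. To carry out the step for $S = \MO\l n+1\r$ one must therefore leverage specific structural features of the connective Thom spectra---presumably via a chromatic fracture square combined with the fact that the sharp construction equips $\MO\l n+1\r^{tC_2}$ with a very strong $\MO\l n\r$-algebra (and hence eventually $H\F_2$-module) structure, not merely chromatic acyclicity. Equivalently, one could iterate the sharp construction directly on the orientation of $R$---skipping those Whitehead stages where $\pi_k\BO$ vanishes---to exhibit $R^{(tC_2)^{\phi(n)}}$ as $\MO$-oriented, and reduce the theorem to the same chromatic descent step applied $\phi(n)$ times in succession.
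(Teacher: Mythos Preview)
Your outline matches the paper's proof essentially verbatim: reduce to $R = \MO\langle n\rangle$, iterate the real sharp construction (skipping stages where $\pi_k\BO = 0$) to equip the $\phi(n)$-fold iterated Tate construction with an $\MO$-orientation, note that this makes it an $\F_2$-algebra and hence $T(0)$-acyclic, and then walk the telescopic height back up one step at a time. The only ingredient you are missing is the citation that closes that last step: the paper invokes Hahn's \emph{purity of Tate blueshift} (\cite[Corollary~4.6.1]{hahnblue}), which for $\E_\infty$-rings $S$ supplies exactly the implication $L_{T(k)}(S^{tC_p}) = 0 \Rightarrow L_{T(k+1)}(S) = 0$ and is applied directly to each of the iterated Tate constructions.

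Your caution about this implication rests on a miscomputation. The relevant hypothesis is $L_{T(0)}\bigl(S^{tC_2}\bigr) = 0$, the rationalization \emph{of} the Tate construction; what your argument for $S = KU_{(2)}$ actually establishes is $\bigl(L_{T(0)} S\bigr)^{tC_2} = (KU_\Q)^{tC_2} = 0$, the Tate construction \emph{of} the rationalization. These differ because $(-)^{tC_2}$ does not commute with $L_\Q$. In fact, in $\pi_*\bigl((KU_{(2)})^{tC_2}\bigr)$ the relation $[2]_F(t) = t(2 + \beta t) = 0$ together with invertibility of $t$ forces $2$ to be a unit, so $(KU_{(2)})^{tC_2}$ is a \emph{nonzero} rational ring spectrum and $L_{T(0)}\bigl((KU_{(2)})^{tC_2}\bigr) \ne 0$ --- entirely consistent with $L_{T(1)} KU_{(2)} \ne 0$. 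Thus $KU_{(2)}$ confirms rather than refutes the needed implication, and no extra structural input on Thom spectra (fracture squares, eventual $H\F_2$-module structure, etc.) is required beyond the $\E_\infty$-structure.
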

\begin{proof}
    It suffices to consider the (initial) case that $R=\MO\l n \r$. By \Cref{sharp_construction} we know that $R_1:=R^{tC_2}$ has an $\MO\l n-1\r$-orientation. Let $R_i:=R_{i-1}^{tC_2}$. Iterating this, we find that $R_{n-1}$ has an $\MO$-orientation and is thus an $\F_2$-algebra. In particular it is $T(0)$-acyclic. By ``purity'' of Tate blueshift (\cite[Corollary 4.6.1]{hahnblue}, that means $R_{n-2}$ is $T(1)$-acylcic. Iterating again, we find $R$ is $T(n-1)$-acyclic. To get $T(\phi(n))$-acyclicity, simply note that $\MO\l k\r\to \MO\l k-1\r$ is an isomorphism if $\pi_k\BO=0$, so that already $R_{\phi(n)}$ is $\MO$-orientable.

\end{proof}

\begin{rem}
    On the topic of $\MO\l n \r$ orientations, we record the following consequence of the chromatic Nullstellensatz (\cite{nullstel}).
    Let $E_k$ be a Morava $E$-theory of height $k$ over an algebraically closed field of characteristic 2. Let $n_k$ be the largest number such that $\MO\l n_k\r$ is not $T(k)$-acyclic. Then $E_k$ admits an $\E_\infty$ orientation by $\MO\l n_k\r$. Note that if $k>1$ then $n_k\geq 3$ because of the existence of the Atiyah-Bott-Shapiro map $\MO\l3\r=\mathrm{MSpin}\to \KO$.
\end{rem}

\subsection{Steenrod squares for $\MU$}
Using the sharp construction we can recover a construction of certain lifts of the Steenrod powers due to tom Dieck (\cite[21.1]{Dieck}).

Consider the lifts of Frobenius of \Cref{thm_lift_of_Frob}
\[\#_{\rho}:\MU\to\MU^{t\T}_p.\]
Identifying $(\MU^{t\T})_p^*=\MU_*(\!(\tu )\!)_p$, the induced map of cohomology theories, at a finite CW complex $X$, defines a sequence of $\MU_p$ cohomology operations $P^{-i}_\MU$ via the formula


\[
\sharp_\rho(\alpha) = \sum_iP_\MU^{-i}(\alpha)t^i.\]

For $i\geq 0$ write
\[P^i:\F_p\to \Sigma^{2(p-1)}\F_p\] 
for the $i$th Steenrod power. At $p=2$ this coincides with $\Sq^{2i}$.

\begin{prop}
  For each prime $p$ and $i\geq 0$ there is a commutative square
\[\begin{tikzcd}
    \MU\arrow[r,"P^i_\MU"]\arrow[d]&\Sigma^{2(p-1)i}\MU\arrow[d] \\
    \F_p\arrow[r,"P^i "]&\Sigma^{2(p-1)i}\F_p \\
\end{tikzcd}.\]  
\end{prop}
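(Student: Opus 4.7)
The plan is to reduce the statement to \Cref{thm_lift_of_Frob} combined with the Nikolaus--Scholze identification of Steenrod powers as Tate coefficients of $\Fr^{\F_p}$. Let $\pi \colon \MU \to \F_p$ denote the canonical mod $p$ reduction, which is an $\E_\infty$ ring map.

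First, I would combine \Cref{thm_lift_of_Frob} (which identifies the composite $\MU \xrightarrow{\sharp_\rho} \MU^{t\T}_p \xrightarrow{i^*} \MU^{tC_p}$ with $\Fr^\MU$) with naturality of the Frobenius along $\pi$ to obtain a commutative rectangle
\[\begin{tikzcd}
\MU \arrow[r,"\sharp_\rho"] \arrow[d,"\pi"'] & \MU^{t\T}_p \arrow[r,"i^*"] & \MU^{tC_p} \arrow[d,"\pi^{tC_p}"] \\
\F_p \arrow[rr,"\Fr^{\F_p}"'] & & \F_p^{tC_p}.
\end{tikzcd}\]

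Next, I would expand both routes around this rectangle as series in $t$. Along the top, a class $\alpha \in \MU^* X$ maps to $\sum_k \pi(P_\MU^{-k}(\alpha))\, t^k$, which by construction lies in the $\tau$-free summand $\F_p^{t\T}_p \subset \F_p^{tC_p}$ arising from the intermediate factorization through $\MU^{t\T}_p$. Along the bottom, $\alpha$ maps to $\Fr^{\F_p}(\pi(\alpha))$, whose $\tau$-free part is, by the standard Nikolaus--Scholze formula (cf.\ \cite[IV.1.15]{NikolausScholze}), precisely $\sum_i P^i(\pi(\alpha))\, t^{-(p-1)i}$. Matching coefficients of $t^{-(p-1)i}$ therefore produces the desired commutative square, once $P^i_\MU$ is identified with $P_\MU^{(p-1)i}$ in the coefficient-of-$t$ indexing used to define the $\MU$-operations from $\sharp_\rho$.

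The main obstacle is not mathematical but notational: one must carefully align the indexing conventions ``$P_\MU^{-k} := $ coefficient of $t^k$ in $\sharp_\rho$'' versus ``$P^i$ raises cohomological degree by $2(p-1)i$'', so that the operation $P^i_\MU$ appearing in the square corresponds unambiguously to the coefficient of $t^{-(p-1)i}$ in $\sharp_\rho(\alpha)$. A secondary point worth checking is that the top-right composite $\pi^{tC_p} \circ i^* \circ \sharp_\rho$ indeed lands in the $\tau$-free summand of $\F_p^{tC_p}$, but this is formal from the functoriality of $(-)^{t\T}$ and the factorization $\F_p^{t\T}_p \to \F_p^{tC_p}$. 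Once these conventions are aligned, commutativity of the square is immediate from the rectangle above.
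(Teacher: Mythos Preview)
Your proof is correct and follows the same essential strategy as the paper: combine the lift of Frobenius $\sharp_\rho$ with naturality of the Tate Frobenius along $\MU \to \F_p$, then invoke \cite[IV.1.15]{NikolausScholze} to identify the Steenrod powers as the relevant $t$-coefficients. Your organization is somewhat more direct than the paper's: the paper routes the argument through $\Z$ and through the $\F_p^\times$-invariant subring $I_p$ of \Cref{defn_Ip}, assembling a larger diagram whose commutativity is checked piece by piece (naturality of restriction, naturality of the $\F_p^\times$-equivariant Frobenius, and the lower triangle from \cite[IV.1.15]{NikolausScholze}). You instead bypass $I_p$ and $\Z$ entirely, using only naturality of the ordinary Frobenius along $\pi$ and the observation that the composite factors through $\F_p^{t\T}_p$, hence lands in the even ($\tau$-free) part of $\F_p^{tC_p}$. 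The paper's detour through $\F_p^\times$-invariants has the advantage of matching more literally how \cite[IV.1.15]{NikolausScholze} is phrased, whereas your argument implicitly unpacks that statement into ``the even coefficient of $t^{-(p-1)i}$ in $\Fr^{\F_p}$ is $P^i$''; both are equivalent, and your version is cleaner once that reformulation is accepted.
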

\begin{proof}
    Consider the diagram (recall that $I_p$ is defined in \ref{defn_Ip})
    \[
\begin{tikzcd}
\MU\arrow[r,"\#_{\rho}"]\arrow[rr,bend left, "\Fr_\MU"]\arrow[d]&I_p\arrow[d]\arrow[r, "\mathrm{restr}"]&(\MU^{tC_p})^{h\F_p^\times}\arrow[d] \\
\Z\arrow[dr,"\Fr_\Z"]\arrow[d]&(\Z^{t\T}_p)^{h\F_p^\times}\arrow[r, "\mathrm{restr}"]\arrow[d]&(\Z^{tC_p})^{h\F_p^\times}\arrow[dl]\\
\F_p\arrow[r,"\prod P^i "']&\arrow[ur](\F_p^{t\T})^{h\F_p^\times}&\\
\end{tikzcd}.\]
To finish the proof it suffices to show that the diagram commutes.

By \Cref{thm_lift_of_Frob} and \Cref{lift_Frob_Fptimes_inv} the top part of the diagram involving $\Fr_\MU$ commutes. The upper right square commutes by naturality of the restriction map. The subdiagram of the form
\[\begin{tikzcd}
    \MU\arrow[r,"\Fr_\MU"]\arrow[d]&(\MU^{tC_p})^{h\F_p^\times}\arrow[d] \\
    \Z\arrow[r,"\Fr_\Z"]&(\Z^{tC_p})^{h\F_p^\times}\\
\end{tikzcd}\] 
commutes by naturality of the $\F_p^\times$-invariant Frobenius (\cite[IV.1.4]{NikolausScholze}). Finally, the lower left triangle commutes by \cite[IV.1.15]{NikolausScholze}. 
\end{proof}
Recall that there do not exist lifts of $\Sq^{2i}$ to $\Z$, because of the Adem relation $\Sq^1\Sq^{2i}=\Sq^{2i+1}\neq0$. 

With $\xu$ and $\tu$ as before, we have
\[\xu \frac{\xu+_\Fu \tu}{\tu}=\sum_i\Sq_\MU^{-i}(\xu)\tu^i,\]
so that unlike in the case of the usual Steenrod algebra, there are nontrivial negative Steenrod operations: for $i>0$
\[\Sq^{-i+1}_\MU(\xu)=\sum_ja_{ij}x^{j+1}.\]

\begin{ques}
It would be interesting to investigate what is the analog of Adem relations for the operations $\Sq^i_\MU$. In particular, one can speculate that a version of  Bullett-MacDonald identity of the form
\[
\sum_{ij}\Sq^i_\MU\Sq^j_\MU(\alpha)\tu^{-i}(\mathbf{s}\frac{\mathbf{s}+_\Fu \tu}{\tu})^{-j}=\sum_{ij}\Sq^i_\MU\Sq^j_\MU(\alpha)\mathbf{s}^{-i}(\tu \frac{\tu+_\Fu \mathbf{s}}{\mathbf{s}})^{-j}
\]
holds.

\end{ques}

\section{Appendix}\label{appendix}
What follows is a simple Mathematica script to expand the expression of \Cref{prop_final_obsser_obs} for the $p$-typification of the Todd orientation (used in \Cref{QI_p3}). The reader is encouraged to reach out to the authors for a .nb or .txt version if they so desire.

\noindent\(\pmb{\text{{``}Choose a prime.{''}};}\\
\pmb{p=3;}\\
\pmb{\text{{``}Choose an n for which you'd like to find E$\_$n obstructions{''}};}\\
\pmb{n=5;}\\
\pmb{\text{tbd}=(n-1)(p-1)/2;}\\
\pmb{\text{$\texttt{"}$Choose a cutoff on the x-degree to expand.
$\texttt{"}$};}\\
\pmb{\text{powchoice}=3;}\\
\pmb{\text{bd}=p{}^{\wedge}\text{powchoice};}\\
\pmb{}\\
\pmb{\text{{``}The FGL of the Todd orientation, its logaritm, and p-typical logarithm / exponential{''}};}\\
\pmb{\text{Fm}[\text{x$\_$},\text{y$\_$}]=x+y-b*x*y;}\\
\pmb{\text{logm}[\text{x$\_$}]=x;}\\
\pmb{\text{For}[i=1,i<\text{bd},i\text{++},\text{logm}[\text{x$\_$}]=\text{logm}[x]+b{}^{\wedge}(i)*x{}^{\wedge}(i+1)/(i+1)];}\\
\pmb{\text{logmptyp}[\text{x$\_$}]=x;}\\
\pmb{\text{For}[i=1,i<\text{powchoice}+1,i\text{++},\text{logmptyp}[\text{x$\_$}]=\text{logmptyp}[x]+b{}^{\wedge}(p{}^{\wedge}(i)-1)*x{}^{\wedge}(p{}^{\wedge}(i))/(p{}^{\wedge}(i))]}\\
\pmb{\text{expptypm}[\text{x$\_$}]=\text{Normal}[\text{InverseSeries}[\text{Series}[\text{logmptyp}[x],\{x,0,\text{bd}\}],x]];}\\
\pmb{}\\
\pmb{\text{{``}The quillen idempotent in terms of the Todd coordinate, and its Frobenius image{''}};}\\
\pmb{\text{QIp}[\text{x$\_$}]=\text{Normal}[\text{Series}[\text{expptypm}[\text{logm}[x]],\{x,0,\text{bd}\}]];}\\
\pmb{\text{FrQIp}[\text{x$\_$}]=\text{QIp}[x]\text{/.}\{b\text{-$>$}p*b\};}\\
\pmb{}\\
\pmb{\text{{``}The k-series and the euler class of the reduced regular representation{''}};}\\
\pmb{\text{Ser}[1,\text{t$\_$}]=t;}\\
\pmb{\text{For}[i=2,i<p+1,i\text{++}, \text{Ser}[i,\text{t$\_$}]=\text{Fm}[t,\text{Ser}[i-1,t]]];}\\
\pmb{\text{Eulp}[\text{t$\_$}]=t;}\\
\pmb{\text{For}[i=2,i<p,i\text{++},\text{Eulp}[\text{t$\_$}]=\text{Normal}[\text{Series}[\text{Eulp}[t]*\text{Ser}[i,t],\{t,0,\text{bd}\}]]];}\\
\pmb{}\\
\pmb{\text{{``}The Frobenius coordinate and its Frobenius image.{''}};}\\
\pmb{\text{Frobcoordp}[\text{x$\_$},\text{t$\_$}]=x;}\\
\pmb{\text{For}[i=1,i<p,i\text{++},\text{Frobcoordp}[\text{x$\_$},\text{t$\_$}]=\text{Normal}[\text{Series}[\text{Frobcoordp}[x,t]*\text{Fm}[x,\text{Ser}[i,t]}\\
\pmb{]*\text{Ser}[i,t]{}^{\wedge}(-1),\{x,0,\text{bd}\}]]];}\\
\pmb{}\\
\pmb{\text{{``}The numerator and denominator of the right-hand term of the obstruction series{''}};}\\
\pmb{\text{QIpFrob}[\text{x$\_$},\text{t$\_$}]=\text{QIp}[x];}\\
\pmb{\text{For}[i=1,i<p,i\text{++},\text{QIpFrob}[\text{x$\_$},\text{t$\_$}]=\text{Normal}[\text{Series}[\text{QIpFrob}[x,t]*\text{QIp}[\text{Fm}[x,\text{Ser}[i,t]]]}\\
\pmb{,\{x,0,\text{bd}\}]]];}\\
\pmb{\text{QIEulp}[\text{t$\_$}]=\text{QIp}[t];}\\
\pmb{\text{For}[i=2,i<p,i\text{++},\text{QIEulp}[\text{t$\_$}]=\text{Normal}[\text{Series}[\text{QIEulp}[t]*\text{QIp}[\text{Ser}[i,t]],\{t,0,\text{bd}\}]]];}\\
\pmb{}\\
\pmb{\text{{``}The obstruciton series{''}};}\\
\pmb{\text{FrCommKUp}[\text{x$\_$},\text{t$\_$}]=\text{Normal}[\text{Series}[\text{Normal}[\text{Series}[\text{Eulp}[t](\text{FrQIp}[\text{Frobcoordp}[x,t]]-}\\
\pmb{\text{QIpFrob}[x,t]*\text{QIEulp}[t]{}^{\wedge}(-1)),\{x,0,\text{bd}\}]],\{t,0,\text{tbd}\}]];}\\
\pmb{}\\
\pmb{\text{$\texttt{"}$The obstruction series after adding multiples of the (reduced) p-series to clear terms of }}\\
\pmb{\text{t-degree leq tbd (long division) $\texttt{"}$};}\\
\pmb{\text{FrCommKUpclean}[\text{x$\_$},\text{t$\_$}]=\text{FrCommKUp}[x,t];}\\
\pmb{\text{For}[i=1,i<\text{bd},i\text{++},\text{FrCommKUpclean}[\text{x$\_$},\text{t$\_$}]=\text{Normal}[\text{Series}[\text{FrCommKUpclean}[x,t]-}\\
\pmb{\text{Coefficient}[\text{FrCommKUpclean}[x,t],t{}^{\wedge}(i-\text{bd})]/p*t{}^{\wedge}(i-\text{bd}-1)*\text{Ser}[p,t],\{t,0,\text{tbd}\}]]];}\\
\pmb{\text{FrCommKUpclean}[\text{x$\_$},\text{t$\_$}]=\text{Normal}[\text{Series}[\text{FrCommKUpclean}[x,t]}\\
\pmb{-\text{Coefficient}[\text{FrCommKUpclean}[x,t],t{}^{\wedge}(-1)]/p*t{}^{\wedge}(-2)*\text{Ser}[p,t],\{t,0,\text{tbd}\}]];}\\
\pmb{\text{FrCommKUpclean}[\text{x$\_$},\text{t$\_$}]=\text{Normal}[\text{Series}[\text{FrCommKUpclean}[x,t]}\\
\pmb{-\text{Coefficient}[\text{FrCommKUpclean}[x,t]*t,t{}^{\wedge}(1)]/p*t{}^{\wedge}(-1)*\text{Ser}[p,t],\{t,0,\text{tbd}\}]];}\\
\pmb{\text{For}[i=1,i<\text{tbd},i\text{++},\text{FrCommKUpclean}[\text{x$\_$},\text{t$\_$}]=\text{Normal}[\text{Series}[\text{FrCommKUpclean}[x,t]-}\\
\pmb{\text{Coefficient}[\text{FrCommKUpclean}[x,t],t{}^{\wedge}(i)]/p*t{}^{\wedge}(i-1)*\text{Ser}[p,t],\{t,0,\text{tbd}\}]]];}\\
\pmb{}\\
\pmb{\text{$\texttt{"}$The final obstruction
$\texttt{"}$};}\\
\pmb{\text{PolynomialMod}[\text{CoefficientList}[\text{FrCommKUpclean}[x,t],\{x\},\{\text{bd}\}],p]\text{//}\text{TableForm}}\)

\bibliographystyle{halpha}

\bibliography{references}

\newcommand{\etalchar}[1]{$^{#1}$}
\begin{thebibliography}{ABG{\etalchar{+}}14}

\bibitem[ABG{\etalchar{+}}14]{abghr}
Matthew Ando, Andrew~J. Blumberg, David Gepner, Michael~J. Hopkins, and Charles Rezk.
\newblock Units of ring spectra, orientations and {T}hom spectra via rigid infinite loop space theory.
\newblock {\em J. Topol.}, 7(4):1077--1117, 2014.

\bibitem[ACB19]{barthelthom}
Omar Antol\'in-Camarena and Tobias Barthel.
\newblock A simple universal property of {T}hom ring spectra.
\newblock {\em J. Topol.}, 12(1):56--78, 2019.

\bibitem[Ada74]{Adams}
J.~F. Adams.
\newblock {\em Stable homotopy and generalised homology}.
\newblock Chicago Lectures in Mathematics. University of Chicago Press, Chicago, Ill.-London, 1974.

\bibitem[AFG08]{afg}
Matthew Ando, Christopher~P. French, and Nora Ganter.
\newblock The {J}acobi orientation and the two-variable elliptic genus.
\newblock {\em Algebr. Geom. Topol.}, 8(1):493--539, 2008.

\bibitem[AHR10]{ahr}
M.~Ando, M.~Hopkins, and C.~Rezk.
\newblock Multiplicative orientations of $ko$-theory and the spectrum of topological modular forms.
\newblock 2010.

\bibitem[Bal23]{Balderrama}
William Balderrama.
\newblock Algebraic theories of power operations.
\newblock {\em J. Topol.}, 16(4):1543--1640, 2023.

\bibitem[BLZ15]{BLZ}
Pavle V.~M. Blagojevi\'c, Wolfgang L\"uck, and G\"unter~M. Ziegler.
\newblock Equivariant topology of configuration spaces.
\newblock {\em J. Topol.}, 8(2):414--456, 2015.

\bibitem[BSY24]{nullstel}
R.~Burklund, T.~Schlank, and A.~Yuan.
\newblock The chromatic nullstellensatz.
\newblock {\em Ann. of Math}, 2024.

\bibitem[Car23]{CarmeliPic}
Shachar Carmeli.
\newblock On the strict {P}icard spectrum of commutative ring spectra.
\newblock {\em Compos. Math.}, 159(9):1872--1897, 2023.

\bibitem[CL25]{tatevaluedchar}
Shachar Carmeli and Kiran Luecke.
\newblock Tate-valued characteristic classes, 2025, 2503.12134.

\bibitem[Dev25]{sanath}
Sanath Devalapurkar.
\newblock Spherochromatism in representation theory and arithmetic geometry, 2025.
\newblock Harvard University PhD Thesis.

\bibitem[GM95a]{GreenleesMaybook}
J.~P.~C. Greenlees and J.~P. May.
\newblock Equivariant stable homotopy theory.
\newblock In {\em Handbook of algebraic topology}, pages 277--323. North-Holland, Amsterdam, 1995.

\bibitem[GM95b]{GreenleesMay}
J.~P.~C. Greenlees and J.~P. May.
\newblock Generalized {T}ate cohomology.
\newblock {\em Mem. Amer. Math. Soc.}, 113(543):viii+178, 1995.

\bibitem[Hah16]{hahnblue}
J.~Hahn.
\newblock On the \text{Bousfield} classes of $\text{H}_\infty$-ring spectra.
\newblock 2016.

\bibitem[HL18]{hoplaw}
Michael~J. Hopkins and Tyler Lawson.
\newblock Strictly commutative complex orientation theory.
\newblock {\em Math. Z.}, 290(1-2):83--101, 2018.

\bibitem[HR95]{ravhovey}
Mark~A. Hovey and Douglas~C. Ravenel.
\newblock The {$7$}-connected cobordism ring at {$p=3$}.
\newblock {\em Trans. Amer. Math. Soc.}, 347(9):3473--3502, 1995.

\bibitem[HRW22]{HRW}
Jeremy Hahn, Arpon Raksit, and Dylan Wilson.
\newblock A motivic filtration on the topological cyclic homology of commutative ring spectra.
\newblock 2022.

\bibitem[HY20]{hahnyuan}
Jeremy Hahn and Allen Yuan.
\newblock Exotic multiplications on periodic complex bordism.
\newblock {\em J. Topol.}, 13(4):1839--1852, 2020.

\bibitem[JN10]{JohnsonNoel}
Niles Johnson and Justin Noel.
\newblock For complex orientations preserving power operations, {$p$}-typicality is atypical.
\newblock {\em Topology Appl.}, 157(14):2271--2288, 2010.

\bibitem[Joa04]{Joachim}
Michael Joachim.
\newblock Higher coherences for equivariant {$K$}-theory.
\newblock In {\em Structured ring spectra}, volume 315 of {\em London Math. Soc. Lecture Note Ser.}, pages 87--114. Cambridge Univ. Press, Cambridge, 2004.

\bibitem[Law18]{LawsonBP}
Tyler Lawson.
\newblock Secondary power operations and the {B}rown-{P}eterson spectrum at the prime 2.
\newblock {\em Ann. of Math. (2)}, 188(2):513--576, 2018.

\bibitem[Lew78]{Lewis}
Lemoine Gaunce~Jr Lewis.
\newblock {\em T{HE} {STABLE} {CATEGORY} {AND} {GENERALIZED} {THOM} {SPECTRA}}.
\newblock ProQuest LLC, Ann Arbor, MI, 1978.
\newblock Thesis (Ph.D.)--The University of Chicago.

\bibitem[Nov67]{Novikov}
S.~P. Novikov.
\newblock Rings of operations and spectral sequences of {A}dams type in extraordinary cohomology theories. {$U$}-cobordism and {$K$}-theory.
\newblock {\em Dokl. Akad. Nauk SSSR}, 172:33--36, 1967.

\bibitem[NS18]{NikolausScholze}
Thomas Nikolaus and Peter Scholze.
\newblock On topological cyclic homology.
\newblock {\em Acta Math.}, 221(2):203--409, 2018.

\bibitem[Qui71]{Quillen}
Daniel Quillen.
\newblock Elementary proofs of some results of cobordism theory using {S}teenrod operations.
\newblock {\em Advances in Math.}, 7:29--56, 1971.

\bibitem[Sen24]{Senger}
Andrew Senger.
\newblock The {B}rown-{P}eterson spectrum is not {$\Bbb{E}_{2 ( p^2 + 2 )}$} at odd primes.
\newblock {\em Adv. Math.}, 458:Paper No. 109996, 33, 2024.

\bibitem[tD68]{Dieck}
Tammo tom Dieck.
\newblock Steenrod-{O}perationen in {K}obordismen-{T}heorien.
\newblock {\em Math. Z.}, 107:380--401, 1968.

\end{thebibliography}

\end{document}